\newcommand{\newcom}{\newcommand}
\newcom{\al}{\alpha}
\newcom{\be}{\beta}
\newcom{\eps}{\epsilon}
\newcom{\veps}{\varepsilon}
\newcom{\e}{\varepsilon}
\newcom{\ga}{\gamma}
\newcom{\Ga}{\Gamma}
\newcom{\ka}{\kappa}
\newcom{\Lam}{\Lambda}
\newcom{\lam}{\lambda}
\newcom{\Om}{\Omega}
\newcom{\om}{\omega}
\newcom{\Si}{\Sigma}
\newcom{\si}{\sigma}
\newcom{\tht}{\theta}
\newcom{\dtri}{\nabla}
\newcom{\tri}{\triangle}
\newcom{\oo}{\infty}
\newcom{\vphi}{\varphi}
\newcom{\cB}{{\mathcal B}}
\newcom{\cC}{{\mathcal C}}
\newcom{\cD}{{\mathcal D}}
\newcom{\cF}{{\mathcal F}}
\newcom{\cH}{{\mathcal H}}
\newcom{\cL}{{\mathcal L}}
\newcom{\cM}{{\mathcal M}}
\newcom{\cN}{{\mathcal N}}
\newcom{\cP}{{\mathcal P}}
\newcom{\cS}{{\mathcal S}}
\newcom{\cQ}{{\mathcal Q}}
\newcom{\cT}{{\mathcal T}}
\newcom{\cY}{{\mathcal Y}}
\newcom{\cZ}{{\mathcal Z}}
\newcom{\R}{\mathbb R}
\newcom{\T}{\mathbb T}
\newcom{\BT}{{\mathbb{T}^2}}
\newcom{\Z}{\mathbb Z}
\newcom{\C}{\mathbb C}
\newcom{\E}{\mathbb E}
\newcom{\hha}{\hat{\mathbf h}}
\newcom{\ha}{\hat{h}}
\newcom{\ul}{\underline}
\newcommand{\vc}[1]{{\mathbf #1}}
\newcom{\ve}{\vc{e}}
\newcom{\vN}{\vc{N}}
\newcom{\vn}{\vc{n}}
\newcom{\vG}{\vc{G}}
\newcom{\vF}{\vc{F}}
\newcom{\vf}{\vc{f}}
\newcom{\vg}{\vc{g}}
\newcom{\vq}{\vc{q}}
\newcom{\vu}{\vc{u}}
\newcom{\vv}{\vc{v}}
\newcom{\vw}{\vc{w}}
\newcom{\vb}{\vc{b}}
\newcom{\vh}{\vc{h}}
\newcom{\vz}{\vc{z}}
\newcom{\vup}{\vu^{+}}
\newcom{\vum}{\vu^{-}}
\newcom{\vvp}{\vv^{+}}
\newcom{\vvm}{\vv^{-}}
\newcom{\vbp}{\vb^{+}}
\newcom{\vbm}{\vb^{-}}
\newcom{\vhp}{\vh^{+}}
\newcom{\vhm}{\vh^{-}}
\newcom{\Omp}{{\Om^+}}
\newcom{\Omm}{{\Om^-}}
\newcom{\vupm}{{\vu^{\pm}}}
\newcom{\vvpm}{{\vv^{\pm}}}
\newcom{\vbpm}{{\vb^{\pm}}}
\newcom{\vhpm}{{\vh^{\pm}}}
\newcom{\vwp}{{\vc{w}^+}}
\newcom{\vwm}{{\vc{w}^-}}
\newcom{\vwpm}{{\vc{w}^{\pm}}}
\newcom{\Ompm}{{\Omega^{\pm}}}
\newcom{\vom}{\boldsymbol{\omega}}
\newcom{\vvap}{\vc{\varpi}}
\newcom{\vop}{\vom^{+}}
\newcom{\vnu}{\vc{\nu}}
\newcom{\vopm}{\vom^{\pm}}
\newcom{\vjp}{\vj^+}
\newcom{\vjm}{\vj^-}
\newcom{\vjpm}{\vj^{\pm}}
\newcom{\vj}{\boldsymbol{\xi}}
\newcom{\Ds} {\langle\nabla\rangle^{s-\f12}}
\newcom{\ds}{{\rm d} s}
\newcom{\f}{\frac}
\newcom{\di}{\displaystyle\int}
\newcom{\dl}{\displaystyle\lim}
\newcom{\ov}{\overline}
\newcom{\sset}{\subset}
\newcom{\wt}{\widetilde}
\newcom{\pa}{\partial}
\newcom{\p}{\partial}
\newcom\na{\nabla}
\newcom{\suml}{\sum\limits}
\newcom{\supl}{\sup\limits}
\newcom{\intl}{\int\limits}
\newcom{\infl}{\inf\limits}
\newcom{\disp}{\displaystyle}
\newcom{\non}{\nonumber}
\newcom{\no}{\noindent}
\newcom{\QED}{$\square$}
\def\div{\mathop{\rm div}\nolimits}
\def\curl{\mathop{\rm curl}\nolimits}
\def\eqdefa{\buildrel\hbox{\footnotesize def}\over =}
\newtheorem{athm}{\bf \t}[section]
\newenvironment{thm} [1] {\def\t{#1}\begin{athm} \bf \rm} {\end{athm}}
\newcom{\bthm}{\begin{thm}}\newcom{\ethm}{\end{thm}}
\newtheorem{theorem}{Theorem}[section]
\newtheorem{lemma}{Lemma}[section]
\newtheorem{remark}{Remark}[section]
\newtheorem{definition}{Definition}[section]
\newtheorem{proposition}{Proposition}[section]
\newcom{\beq}{\begin{equation}}
\newcom{\eeq}{\end{equation}}
\newcom{\ben}{\begin{eqnarray}}
\newcom{\een}{\end{eqnarray}}
\newcom{\beno}{\begin{eqnarray*}}
\newcom{\eeno}{\end{eqnarray*}}
\newcom{\bali}{\begin{aligned}}
\newcom{\eali}{\end{aligned}}
\numberwithin{equation}{section}
\begin{document}

\title[Well-posedness of plasma-vacuum interface problem]{Well-posedness of the plasma-vacuum interface problem for ideal incompressible MHD}

\author{Yongzhong Sun}
\address{Department of Mathematics, Nanjing University, 210093, Nanjing, P. R. China}
\email{sunyz@nju.edu.cn}

\author{Wei Wang}

\address{School of Mathematical Sciences, Zhejiang University, 310027, Hangzhou, P. R. China}
\email{wangw07@zju.edu.cn}

\author{Zhifei Zhang}
\address{School of Mathematical Sciences, Peking University, 100871, P. R. China}
\email{zfzhang@math.pku.edu.cn}


\begin{abstract}
  In this paper, we prove the local well-posedness of plasma-vacuum interface problem for ideal incompressible magnetohydrodynamics under the stability condition: the magnetic field $\vh$ and the vacuum magnetic field $\hat\vh$ are non-collinear on the interface(i.e., $|\vh\times \hat \vh|>0$), which was introduced by Trakhinin as a stability condition for the compressible plasma-vacuum interface problem.
\end{abstract}

\maketitle


\section{Introduction}\label{s1}

\subsection{Presentation of the problem}

Magnetohydrodynamic (MHD) models describe macroscopic plasma phenomena, from laboratory research on thermonuclear fusion to plasma-astrophysics of the solar system. In the laboratory research, the main topic is magnetic plasma confinement for energy production by controlled thermonuclear reactions. The plasma-vacuum interface appears as a typical phenomenon when the plasma is separated by a vacuum from outside wall. The total pressure is balanced on the interface, while the normal part of the magnetic field vanishes and the tangent part may jump, thus forms a tangential discontinuity. Mathematically the plasma-vacuum interface is formulated as a free boundary problem for the MHD system, see for example \cite{GP}. By ignoring the viscosity, the resistivity and heat conduction, we assume that the plasma fluid is ideal and incompressible. The evolution of the velocity $\vu=(u^1, u^2, u^3)$, the magnetic field $\vh=(h_1,h_2,h_3)$ and the total  pressure $p$ is formulated by the following system of partial differential equations:
\beno
\left\{
\begin{array}{l}
\p_t \vu + \vu\cdot\nabla\vu - \vh\cdot\nabla\vh + \nabla p= 0, \\
\div \vu = 0,\quad \div\vh=0, \\
\p_t \vh + \vu\cdot\nabla\vh - \vh\cdot\nabla\vu = 0.
\end{array}\right.
\eeno
Here the total pressure $p=q + \frac{1}{2}|\vh|^2$ with $q$ the fluid pressure. For technical reason, we consider the plasma-vacuum interface problem under a simplified configuration. Denote $\Om = \mathbb{T}^2 \times (-1,1)$ with the top/bottom boundary $\Gamma^{\pm}= \mathbb{T}^2 \times \{\pm 1\}$ and assume that the plasma is initially confined in the domain
\[
\Om^{-}_{f_0}=\left\{ x=(x',x_3) \in \Om | x_3 < f_0(x')\right\}, \, x'=(x_1,x_2)\in \mathbb{T}^2,
\]
where $f_0(x')$ is a function defined on $\mathbb{T}^2$ and
\[
\Gamma_{f_0} := \left\{x\in \Om | x_3=f_0(x'), x' \in \mathbb{T}^2 \right\}
\]
is the initial interface. Consequently,
\[
\Om^{+}_{f_0}=\left\{ x=(x',x_3) \in \Om | x_3 > f_0(x')\right\}
\]
is the region of the initial vacuum. After the initial time, the plasma evolves and the interface moves simultaneously. At the time $t>0$, let us assume the interface is represented as
\[
\Gamma_f = \Gamma_{f(t)} := \left\{x\in \Om | x_3=f(t,x'), x' \in \mathbb{T}^2 \right\},
\]
and denote
\[
\Om^-_f = \Om^{-}_{f(t)}=\left\{ x=(x',x_3) \in \Om | x_3 < f(t,x')\right\}, \quad Q^-_T :=\cap_{t\in (0,T)}\{t\}\times \Om^-_f,
\]
\[
\Om^{+}_f =  \Om^{+}_{f(t)}=\left\{ x=(x',x_3) \in \Om | x_3 > f(t,x')\right\}, \quad Q^+_T :=\cap_{t\in (0,T)}\{t\}\times \Om^+_f.
\]

With these notations, the evolution of the plasma part is formulated as
\ben\label{mhd01}
&&\p_t \vu + \vu\cdot\nabla\vu - \vh\cdot\nabla\vh + \nabla p= 0\quad \text{in}\quad Q^-_T,\\
\label{mhd02} &&\div \vu = 0,\,\, \div\vh=0\quad \text{in}\quad Q^-_T,\\
\label{mhd03} &&\p_t \vh + \vu\cdot\nabla\vh - \vh\cdot\nabla\vu = 0\quad \text{in}\quad Q^-_T.
\een
In the vacuum domain $\Om^+_f$, we consider so-called {\it pre-Maxwell dynamics}. In such case, the magnetic field $\hat\vh=(\hat{h}_1,\hat{h}_2,\hat{h}_3)$ is
determined by the div-curl system:
\beq\label{mhd1}
\div\hha=0, \quad \curl \hha=0\quad \text{in}\quad \Omega^+_f.
\eeq

The physical quantities of the plasma and the vacuum region are related by the pressure balance condition on the interface $\Gamma_f$:
\beq\label{mhd2}
p(t,x)=\frac12|\hat{\vh}(t,x)|^2, \quad (t,x)\in \Gamma_f
\eeq
as well as
\beq\label{mhd3}
\vh\cdot\vN_f = 0, \quad \hat\vh\cdot\vN_f = 0, \quad (t,x)\in \Gamma_f.
\eeq
Here
\[
\vN_f = (N_1,N_2,N_3) = (-\p_1 f, -\p_2 f, 1)
\]
is the normal vector of $\Gamma_f$. As the interface moves with the fluid particles, its normal velocity $\p_t f$ satisfies
\beq\label{mhd4}
\p_t f = \vu\cdot\vN_f.
\eeq
Moreover, on the artificial boundaries $\Gamma^{\pm}$, we prescribe the following boundary conditions:
\ben\label{mhd5}
&&u_3=0,\,\, h_3=0\quad \text{on}\quad\Gamma^-,\\
\label{mhd6}
&&\hat{\vh}\times \vc{e}_3 = \hat{\mathbf{J}}\quad\text{on}\quad\Gamma^+,
\een
where $\vc{e}_3 = (0,0,1)$ is the unit normal vector on $\Gamma^+$. Here to avoid trivial solution $\hha$ in the vacuum, a surface current $\hat{\vc{J}}=({\hat{J}}_1,{\hat{J}}_2,{\hat{J}}_3)$ is added as an outer force term to the elliptic system (\ref{mhd1}).
In real laboratory plasma, this surface current can be produced by a system of coils, see \cite{GP,ST}. Finally, the system is supplemented with the initial data
\beq\label{mhd7}
\vu(0,x) = \vu_{0}(x), \quad \vh(0,x)=\vh_{0}(x)\,\,\text{ in }\,\, \Om^-_{f_0}, \quad f(0,x_1,x_2) = f_0(x_1,x_2),
\eeq
which satisfy the following compatibility conditions:
\beq\label{mhd8}
\left\{\begin{aligned}
&\div \vu_{0} = 0, \,\, \div\vh_{0} = 0\,\, \text{ in } \Om_{f_0}^-,\,\,  \\
&\vh_{0}\cdot\vN_{f_0} = 0\,\, \text{ on } \Gamma_{f_0},\quad u_{30},\, h_{30} = 0\,\,\text{ on }\Gamma^- .
\end{aligned}\right.
\eeq
From (\ref{mhd6}) and the fact that $\curl \hha=0$, we also need compatibility conditions on the imposed surface current:
\begin{align}\label{mhd8-0}
  \p_1\hat{J}_1 + \p_2\hat{J}_2 = 0, \,\,\hat{J}_3 = 0\,\,\text{ on }\,\Gamma^+.
\end{align}

Note that the initial magnetic field $\hat\vh_0$ in the vacuum region is uniquely determined from $\Gamma_{f_0}, \vu_0,\vh_0$ and $\hat{\vc{J}}_0= \hat{\vc{J}}(0,x')$ by solving the following div-curl system:
\beq\label{mhd9}
\left\{\begin{array}{l}
\curl \hat\vh_0 =0,\quad \div \hat\vh_0 = 0\,\, \text{ in } \Om_{f_0}^+,\\
\hat\vh_0\cdot\vN_{f_0} =0 \text{ on } \Gamma_{f_0},   \quad
\hat\vh_0 \times \ve_3 = \hat{\vc{J}}_0\,\, \text{ on } \Gamma^{+}.
\end{array}\right.
\eeq
The solvability of this div-curl system will be shown in Section \ref{s4}. Also note that since
\[
\p_t\div\vh + \vu\cdot\nabla\div\vh=0,
\]
the divergence free restriction on $\vh$ is automatically satisfied if it holds for $\vh_0$. Similar argument also yields $\vh\cdot\vN_f = 0$ provided $\vh_{0}\cdot\vN_{f_0} = 0$.

\subsection{Backgrounds}

In the absence of the magnetic field, the system is reduced to the incompressible Euler equations with a free boundary, which is so-called water-wave problem.
In this case, it is well-known that a sufficient condition ensuring the well-posedness of the water-wave problem is the Taylor's sign condition:
\ben\label{Taylor}
\frac{\p p}{\p \vN} \leq - \epsilon <0\quad \text{ on }\,\,\Gamma_f.
\een
See \cite{ABZ, CS, Lan, Lin, SZ, Wu1, Wu2, ZZ} and references therein. In fact, it is also necessary in the absence of surface tension \cite{Ebin}.

The fact that the magnetic field has the stabilizing effect for the current-vortex sheet problem was found by physicists long before \cite{Ax, Sy}.
In past decade, there are many works devoted to the well-posedness of the current-vortex sheet problem under a suitable stability condition \cite{CW,CMST,SWZ,Tra1,Tra2,WY}.

In \cite{Tra-JDE}, Trakhinin introduced the following stability condition for the linearized compressible plasma-vacuum interface problem:
\beq\label{stab0}
|\vh\times\hat\vh| >0\, \text{ on }\Gamma_f.
\eeq
Under this condition, Secchi and Trakhinin \cite{ST} proved the well-posedness of the compressible plasma-vacuum interface problem, and
Morando, Trakhinin and Trebeschi \cite{MTT} proved the well-posedness of the linearized incompressible plasma-vacuum interface problem.
However, the well-posedness of nonlinear incompressible problem \big(i.e., the system (\ref{mhd01})-(\ref{mhd7})\big) is still open under \eqref{stab0}.

Motivated by works on the water-wave problem, Luo and Hao \cite{Hao, HL} established \emph{a priori} estimates for the incompressible plasma-vacuum interface problem under the Taylor's sign condition \eqref{Taylor}. Recently, Gu and Wang \cite{GW} proved the well-posedness of the incompressible plasma-vacuum problem under \eqref{Taylor}. Let us also mention that
the well-posedness of the plasma-vacuum interface problem under \eqref{Taylor} is still unknown when the vacuum magnetic field $\hat\vh$ is non trivial.
In \cite{Hao, HL, GW}, the authors only considered the case of $\hat\vh=0$. This problem is also unsolved in the compressible case \cite{Tra-JDE} .

\subsection{Main result}
The goal of this paper is to prove the well-posedness of the system (\ref{mhd01})-(\ref{mhd7}) under the stability condition \eqref{stab0}.
This condition implies that $\vh$ and $\hat\vh$ are non-collinear everywhere on $\Gamma_f$, which means
\beq\label{stab1}
\inf_{x\in\Gamma_f}\inf_{\vq\in T_{\Gamma_f}(x),|\vq|=1} \left[(\vh(x)\cdot\vq)^2+(\hat\vh(x)\cdot\vq)^2\right] >0.
\eeq
It will be shown in Section \ref{s5} that (\ref{stab1}) implies that there exists a positive constant $c_1$ such that
\ben\label{con:stab2}
\Lambda(\vh,\hat\vh)\eqdefa\inf_{x\in\Gamma_f}\inf_{\varphi_1^2+\varphi_2^2=1}\left[(h_1\varphi_1+h_2\varphi_2)^2+(\hat{h}_1\varphi_1+\hat{h}_2\varphi_2)^2\right]\ge c_1.
\een

Our main result is stated as follows.

\begin{theorem}\label{main}
Let $s\ge 3$ be an integer. Assume that
\beno
&&f_0\in H^{s+\f12}(\BT),\,  \vu_0,\,\vh_0\in H^{s}(\Omega_{f_0}^-),\\
&&\hat{\mathbf{J}}\in C\big([0,T_0];H^{s-\f12}(\BT)\big),\quad \p_t\hat{\mathbf{J}} \in C\big([0,T_0];H^{s-\f32}(\BT)\big),
\eeno
which satisfies the compatibility conditions  (\ref{mhd8})-(\ref{mhd8-0}) and the stability condition:
\beq\label{sta4}
-(1-2c_0)\le f_0\le (1-2c_0), \quad  \Lambda(\vh_0,\hat\vh_0)\ge 2c_1
\eeq
for some $c_0\in (0,\f12)$ and $c_1>0$.
Then there exists $T\in (0,T_0)$ such that the system (\ref{mhd01})-(\ref{mhd7}) admits a unique solution $(f, \vu, \vh, \hat{\vh})$ in $[0,T]$ such that
\ben\label{mhd11}
&&f\in L^\infty(0,T; H^{s+\f12}(\BT)), \,
\vu,\,\vh \in L^\infty(0,T;H^{s}(\Omega^-_f)), \,\hat{\vh}\in L^\infty(0,T;H^{s}(\Omega^+_f)),\\
\label{mhd12}
&&-(1-c_0)\le f\le (1-c_0), \,\,
\Lambda(\vh,\hat{\vh})\ge c_1.
\een
\end{theorem}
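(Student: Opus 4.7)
The plan is to solve the free-boundary system \eqref{mhd01}--\eqref{mhd7} by constructing approximate solutions through a Picard-type iteration in a straightened geometry, then passing to the limit using uniform bounds supplied by a dedicated \emph{a priori} estimate in which the non-collinearity hypothesis \eqref{con:stab2} plays the role that the Taylor sign condition plays for the water wave problem. Uniqueness will follow from the same mechanism applied at lower regularity.

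First I would flatten the two moving regions by a diffeomorphism $\Phi_f:\Om^{\pm}\to\Om^{\pm}_f$ obtained from a harmonic extension of $f$ into $\BT\times(0,\pm 1)$. In the new variables, the plasma equations \eqref{mhd01}--\eqref{mhd03} become a transport-elliptic problem on the fixed domain $\Om^{-}$ with $f$-dependent coefficients; the constraints $\div\vh=0$ and $\vh\cdot\vN_f=0$ are propagated by the transport structure; the pressure is recovered from an elliptic mixed boundary-value problem with Dirichlet data $\tfrac12|\hha|^2$ on $\Gamma_f$ and a Neumann condition on $\Gamma^{-}$; and the vacuum field solves the div-curl system \eqref{mhd9} on $\Om^{+}$, whose solvability is the content of Section \ref{s4}. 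The kinematic equation \eqref{mhd4} then drives the evolution of the graph function $f$.

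The iteration is the natural one: given $(f^n,\vu^n,\vh^n,\hha^n)$, I solve the linearized transport-type problem for the plasma fields with coefficients frozen at step $n$ to obtain $(\vu^{n+1},\vh^{n+1})$, advance $f^{n+1}$ by $\p_t f^{n+1}=\vu^{n+1}\!\cdot\vN_{f^n}$, recover the new pressure by an elliptic solve, and obtain $\hha^{n+1}$ by solving \eqref{mhd9} on the updated vacuum domain. The main obstacle is the \emph{a priori} estimate producing uniform $L^\infty_tH^s$ control of $(\vu,\vh,\hha)$ and $L^\infty_tH^{s+\f12}$ control of $f$. The plan is to combine a tangentially-differentiated energy identity for $(\vu,\vh)$ on $\Om^{-}_f$ with div-curl $H^s$ estimates for $\hha$ on $\Om^{+}_f$, the two sides being coupled through the pressure jump \eqref{mhd2}. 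Top-order control of $f$ is the delicate point: differentiating \eqref{mhd4} once in time, inserting the momentum equation and the vacuum boundary relation, and commuting with $\Ds$, I expect to derive a second-order-in-time equation for $\Ds f$ whose principal part has the schematic form
\beno
\p_t^2 \Ds f + \bigl[(\vh\cdot\nabla')^2 + (\hha\cdot\nabla')^2\bigr] \Ds f = (\text{lower order}),
\eeno
dressed by a positive Dirichlet-to-Neumann operator; by \eqref{con:stab2} the tangential operator in brackets is coercive on all tangential modes uniformly on $\Gamma_f$, which allows one to close a weighted energy for $f$ in $H^{s+\f12}$. The commutator errors, which formally lose a derivative, will be absorbed using paradifferential calculus in the spirit of Alinhac's good unknown together with the elliptic regularity of the pressure and of $\hha$.

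Once the uniform bounds are in hand, convergence of the iterates in a lower-order norm follows from a standard contraction argument, producing a solution on an interval $[0,T]$ whose length depends only on the data norms and on the constants $c_0,c_1$ in \eqref{sta4}; continuity in time of the top-order norms is then obtained by a Bona--Smith-type regularization. Uniqueness is proved by applying the same energy method to the difference of two solutions one regularity level below, again invoking \eqref{con:stab2} to close the estimate on the difference of the free surfaces.
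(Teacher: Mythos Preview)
Your proposal captures the central mechanism correctly: the non-collinearity condition \eqref{con:stab2} makes the second-order evolution for the graph $f$ strictly hyperbolic with principal symbol $(\ul{h_i}\xi_i)^2+(\ul{\hat h_i}\xi_i)^2$, and this is indeed what closes the top-order estimate on $f$ (compare \eqref{eq:theta-intro} and Section~\ref{s31}).

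Where you diverge from the paper is in the choice of iteration unknowns and in the analytic toolkit. The paper does \emph{not} iterate on $(\vu,\vh)$ directly; it passes to the vorticity $\vom=\curl\vu$ and current $\vj=\curl\vh$, which obey the symmetric-hyperbolic transport system \eqref{eq:vorticity-w}--\eqref{eq:vorticity-h} with no pressure term and no boundary loss, together with the scalar averages $\beta_i,\gamma_i$ on the fixed bottom. The fields $\vu,\vh$ are then \emph{recovered} at each step from div-curl systems (Propositions~\ref{prop:div-curl}--\ref{prop:div-curl-2}) with normal trace supplied by $\partial_t f$. This decouples the interior estimate from the boundary and confines all the difficulty to the scalar equation for $f$; your direct scheme, by contrast, has to carry the pressure through the iteration. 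Second, the paper uses no paradifferential calculus and no Alinhac good unknown: working with the \emph{scaled} normal velocity $\theta=\vu\cdot\vN_f$, Lemma~\ref{rel-uh} together with the identity $\tfrac12\vN_f\cdot\nabla|\hha|^2=\sum_{i,j}\ul{\hat h_i}\,\ul{\hat h_j}\,\partial_i\partial_j f$ (a consequence of $\curl\hha=0$) converts the Dirichlet--Neumann contribution from the vacuum side into a \emph{local} tangential second-order operator, so that the remainder $\mathfrak g$ in \eqref{eq:g-def} is genuinely lower order and is handled by ordinary elliptic regularity (Lemma~\ref{lem:non-g}). Your phrase ``dressed by a positive Dirichlet-to-Neumann operator'' is thus slightly misleading: no DN operator survives at principal level here, unlike in the Taylor-sign water-wave problem. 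Both routes should succeed, but the paper's vorticity--current reformulation and this exact boundary algebra let it close with nothing beyond the elementary commutator bound of Lemma~\ref{lem:commutator}, avoiding the heavier microlocal machinery you propose.
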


The idea of the proof is motivated by our recent work on nonlinear stability of incompressible current-vortex sheet problem \cite{SWZ}.
The key ingredient is to derive an evolution equation of the scaled normal velocity $\vu\cdot\vN$ rather than the
usual normal component of velocity on the interface. By some tricky observations, we find in present case that the evolution equation of the height function of the interface takes as follows
\begin{align}\label{eq:theta-intro}
D_t^2f=\sum_{i,j=1,2}
\big(\ul{h_i}\ul{h_j}
+\ul{\hat h_i}\ul{\hat h_j}\big)\partial_i\partial_jf+\text{{L.O.T}.},
\end{align}
where $D_tf=\pa_tf+\ul{u_1}\pa_1f+\ul{u_2}\pa_2f$, $\ul{g}$ denotes the trace of $g$ on the interface, and {L.O.T.} denotes the lower order terms.
Now, the stability condition \eqref{con:stab2} ensures that
the equation \eqref{eq:theta-intro} is strictly hyperbolic. Indeed, the principal symbol of the operator
$-\sum_{i,j=1,2}
\big(\ul{h_i}\ul{h_j}
+\ul{\hat h_i}\ul{\hat h_j}\big)\partial_i\partial_j$ is
\beno
(\ul{h_i}\xi_i)^2+(\ul{\hat h_i}\xi_i)^2\ge c_1|\xi|^2.
\eeno

The motion of the fluid and the magnetic field will be described by the following vorticity and current system:
\begin{align*}
\left\{\begin{array}{l}
\p_t\vom+\vu\cdot\nabla\vom-\vh\cdot\nabla\vj=\vom\cdot\nabla\vu-\vj\cdot\nabla\vh\quad \text{ in }\,\,Q_T^-,\\
\p_t\vj+\vu\cdot\nabla\vj-\vh\cdot\nabla\vom=\vj\cdot\nabla\vu
-\vom\cdot\nabla\vh-2\nabla u_i\times\nabla h_i\quad \text{ in }\,\,Q_T^-.
\end{array}\right.
\end{align*}
where $\vom = \nabla\times\vu,\, \vj = \nabla\times\vh$. With the vorticity and current, the velocity and magnetic field can be recovered by solving two div-curl systems defined on the fluid domain and the vacuum domain respectively. For the fluid domain, we need to solve a div-curl system with given normal component
on the boundaries, which is the same as in \cite{SWZ}. For the vacuum domain, we need to solve  another div-curl system for $\hat\vh$:
\beno\left\{\begin{aligned}
&\div \hat\vh= 0,\quad \curl \hat\vh = 0\,\,\text{ in } \Om_f^+,\\
&\hat\vh\cdot\vn_f = \hat\vartheta \text{ on } \Gamma_f,\quad
\hat\vh\times\ve_3 = \hat{\mathbf{J}}\,\, \text{ on } \Gamma^{+}.
\end{aligned}\right.\eeno
The main difference of these two system is the boundary condition on the fixed boundary.
We state the solvability and estimates of solutions to the div-curl systems in Section 4.

The construction of the approximate solution is completed by introducing the suitable linearization of the system and the iteration map. It can be proved that the constructed approximate sequence is a Cauchy sequence, and the limit system is equivalent to the origin system.
\medskip

The rest of this paper is organized as follows. In Section \ref{s2}, we introduce the harmonic coordinate and preliminary results on the harmonic extensions and Dirichlet to Neumann operators. In Section \ref{s3}, the system is replaced by a new formulation.
In Section \ref{s4}, two div-curl systems are solved. In Section \ref{s5}, we establish the uniform estimates for the linearized system. Section \ref{s6}-\ref{s8} are devoted to proving existence (and uniqueness) of the solution.

\section{Harmonic Coordinate and Dirichlet-Neumann Operator}\label{s2}
In this section, we recall some facts and well-known results on the reference domain, the harmonic coordinate and Dirichlet-Neumann operators.

We first introduce some notations used throughout this paper. The coordinate in the fluid region
is denoted as $x=(x_1,x_2,x_3)$ or $y=(y_1,y_2,y_3)$, while $x'=(x_1,x_2)$ or $y'=(y_1, y_2)$ is the natural coordinates on the interface or on the top/bottom boundary $\Gamma^{\pm}$. For a function $g:\Omega\to\mathbb{R}$, we denote $\nabla g=(\partial_1g,\partial_2g,\partial_3g)$ and for a function $\eta:\BT\to\mathbb{R}$,
$\nabla\eta=(\partial_1\eta,\partial_2\eta)$. The trace on $\Gamma_f$ for a function $g:\Omega_f^\pm\to\mathbb{R}$ is denoted by $\ul{g}$.
Thus, for $i=1,2$,
\beq\label{res}
(\partial_{i}\ul{g})(x')=\partial_ig(x',f(x'))+\partial_3g(x',f(x'))\partial_if(x') = \ul{\p_i g} + \ul{\p_3g}\p_if(x').
\eeq
Finally, the Sobolev norm in $\Om^{\pm}_f$ is denoted as $\|\cdot\|_{H^k(\Om^{\pm}_f)}$ and $\|\cdot\|_{H^k}$ is the Sobolev norm in $\T^2$.

To solve the free boundary problem, we introduce a fixed reference domain. Let $\Gamma_*$ be a fixed graph given by
\begin{align*}
\Gamma_*=\Big\{(y_1,y_2,y_3):y_3=f_*(y_1,y_2)\Big\}.
\end{align*}
The reference domain $\Om_*^{\pm}$ is given by
\begin{align*}
\Om_*=\mathbb{T}^2\times(-1,1),\quad\Om_*^{\pm}=\Big\{ y \in \Om_*| y_3 \gtrless f_*(y')\Big\}.
\end{align*}

We will seek a free boundary lying in a neighborhood of the reference domain. To this end, we define
\begin{align*}
\Upsilon(\delta,k)&\eqdefa\Big\{f\in H^k(\mathbb{T}^2): \|f-f_*\|_{H^k(\mathbb{T}^2)}\le \delta \Big\}.
\end{align*}
For $f\in \Upsilon(\delta,k)$, the graph $\Gamma_f$ is defined as
\[
\Gamma_f\eqdefa\left\{x\in \Om | x_3=f(t,x'), x' \in \mathbb{T}^2 \right\},
\]
and use the notations $\Om^\pm_f, \vN_f,\Gamma^\pm$, etc., as in Section \ref{s1}.

\begin{remark}
Since we intend to solve the plasma-vacuum interface problem \emph{locally in time}, a natural choice of $\Gamma_*$ would be certainly the initial interface $\Gamma_0$.
\end{remark}

To handle the plasma-vacuum interface problem, we need to introduce different Dirichlet-Neumman operators on $\Om^\pm_f$. For a function $\psi(x')=\psi(x_1,x_2)\in H^s(\mathbb{T}^2)$, its harmonic extension from $\Gamma_f$ to $\Omega^\pm_f$
is denoted as $\mathcal{H}_f^\pm \psi$, i.e.,
\beq\left\{\begin{aligned}
&\Delta \mathcal{H}_f^\pm \psi =0 \text{ in }\Omega_f^\pm,\\
&(\mathcal{H}_f^\pm \psi)(x',f(x'))=\psi(x'), \, x'\in\mathbb{T}^2,\\
&\partial_3\left(\mathcal{H}_f^\pm\psi\right) (x',\pm 1)=0, \, x'\in\mathbb{T}^2.
\end{aligned}\right.
\eeq
Moreover, for a function $g$ defined in $\Om^\pm_f$, we denote $\widehat{\mathcal{H}}^\pm_f g$ as a harmonic function in $\Om^\pm_f$ such that
\beq\left\{\begin{aligned}
&\Delta \widehat{\mathcal{H}}^\pm_f g =0 \text{ in } \Omega_f^\pm,\\
&(\widehat{\mathcal{H}}^\pm_f g )(x',f(x'))=g(x',f(x')), \, x'\in\mathbb{T}^2,\\
&\partial_3\left(\widehat{\mathcal{H}}^\pm_f g\right) (x',\pm1)=\p_3 g(x',\pm1), \, x'\in\mathbb{T}^2.
\end{aligned}\right.
\eeq
The Dirichlet-Neumann (D-N in the following context) operators are defined as
\begin{align}
\mathcal{N}^\pm_f \psi = \mp\vN_f\cdot \nabla\mathcal{H}^\pm_f\psi \big|_{\Gamma_f}, \quad \widehat{\mathcal{N}}^\pm_f g =  \mp \vN_f\cdot\nabla\widehat{\mathcal{H}}^\pm_f g \big|_{\Gamma_f}.
\end{align}
For a function $g$ defined in $\Om_f^+$, we denote
\begin{align}
\overline{\mathcal{N}}_f g = \widehat{\mathcal{N}}^+_f g - \mathcal{N}_f^- \underline{g}.
\end{align}

Given $f\in \Upsilon(\delta,k)$, a map (harmonic coordinate) $\Phi_f^\pm$ from $\Omega_*^\pm$ to $\Omega_f^\pm$
is defined by harmonic extension:
\beq\label{sys:har-coordinate}\left\{\begin{aligned}
&\Delta_y \Phi_f^\pm=0   \text{ in } \Omega_*^\pm,\\
&\Phi_f^\pm(y',f_*(y'))=(y',f(y')), \, y'\in\mathbb{T}^2,\\
&\Phi_f^\pm(y',\pm1)=(y',\pm1), \, y'\in\mathbb{T}^2.\\
\end{aligned}\right.
\eeq
Given $\Gamma_*$, there exists $\delta_0=\delta_0(\|f_*\|_{W^{1,\infty}})>0$ so that $\Phi_f^\pm$ is a bijection whenever $\delta\le \delta_0$.
Thus, there exists an inverse map $\Phi_f^{\pm-1}$ from $\Omega_f^\pm$ to $\Omega_*^\pm$ such that
\beno
\Phi_f^{\pm-1}\circ\Phi_f^\pm=\mathrm{Id}_{\Om^\pm_*}, \, \Phi_f^\pm\circ\Phi_f^{\pm-1}=\mathrm{Id}_{\Om^\pm_f}.
\eeno

We have the following properties of harmonic coordinates, see \cite{SWZ} for example.

\begin{lemma}\label{lem:basic}
Let $f\in \Upsilon(\delta_0,s-\f12)$ for $s\ge 3$. There exists a constant $C$ depending only on $\delta_0$ and  $\|f_*\|_{H^{s-\f12}}$ so that
\begin{itemize}
\item[1.] If $u\in H^{\sigma}(\Om_f^\pm)$ for $\sigma\in [0,s]$, then
\beno
&&\|u\circ\Phi_f^\pm\|_{H^\sigma(\Om^\pm_*)}\le C\|u\|_{H^\sigma(\Om_f^\pm)}.
\eeno
\item[2.] If $u\in H^{\sigma}(\Om_*^\pm)$ for $\sigma\in [0,s]$, then
\beno
\|u\circ\Phi_f^{\pm-1}\|_{H^{\sigma}(\Om_f^\pm)}\le C\|u\|_{H^\sigma(\Om_*^\pm)}.
\eeno

\item[3.] If $u, v\in H^{\sigma}(\Om_*^\pm)$ for $\sigma\in [2,s]$, then
\beno
\|(uv)\circ\Phi_f^{\pm-1}\|_{H^\sigma(\Omega_f^\pm)}\le C\|u\|_{H^\sigma(\Omega_*^\pm)}\|v\|_{H^\sigma(\Omega_*^\pm)}.
\eeno
\end{itemize}
\end{lemma}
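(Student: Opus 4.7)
All three assertions follow from one central ingredient: high-order Sobolev control of the harmonic coordinate $\Phi_f^\pm$ and its inverse. Once this is in place, parts 1 and 2 reduce to the usual chain-rule / Moser composition estimate in Sobolev spaces, and part 3 reduces to part 2 combined with the Sobolev algebra property in $H^\sigma$ for $\sigma>3/2$.

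\noindent\textbf{Step 1: control of the harmonic coordinate.} The map $\Psi_f^\pm\eqdefa\Phi_f^\pm-(y_1,y_2,y_3)$ satisfies the mixed boundary value problem obtained from \eqref{sys:har-coordinate}: it is harmonic in $\Om_*^\pm$, vanishes on $y_3=\pm1$, and equals $(0,0,f(y')-f_*(y'))$ on $\Gamma_*$. Since $f-f_*\in H^{s-\f12}(\BT)$ with norm at most $\delta_0$, standard elliptic regularity on the Lipschitz graph domain $\Om_*^\pm$ (whose geometry is controlled by $\|f_*\|_{H^{s-\f12}}$ alone) gives
\[
\|\Psi_f^\pm\|_{H^\sigma(\Om_*^\pm)}\le C\big(1+\|f\|_{H^{\sigma-\f12}(\BT)}\big),\qquad 1\le\sigma\le s,
\]
with $C=C(\delta_0,\|f_*\|_{H^{s-\f12}})$. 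Because $s-1\ge2>\f32$, Sobolev embedding promotes this to a $C^1$ bound, and choosing $\delta_0$ small forces $D\Phi_f^\pm$ to lie in a small $L^\infty$-neighborhood of the identity, so that $|\det D\Phi_f^\pm|$ is bounded above and below by positive constants. The bi-Lipschitz property thus follows, and differentiating the identity $\Phi_f^{\pm,-1}\circ\Phi_f^\pm=\mathrm{Id}$ together with Moser estimates yields the same kind of $H^s$ control for $\Phi_f^{\pm,-1}-\mathrm{Id}$ on $\Om_f^\pm$.

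\noindent\textbf{Step 2: parts 1 and 2.} The case $\sigma=0$ is just change of variables with the Jacobian bound from Step 1. For integer $1\le\sigma\le s$, the chain rule expresses $D^\sigma(u\circ\Phi_f^\pm)$ as a sum of products of factors of the form $(D^k u)\circ\Phi_f^\pm$ multiplied by polynomials in $D^j\Phi_f^\pm$ with $1\le j\le\sigma$. Each such product is estimated in $L^2(\Om_*^\pm)$ by using change of variables on the $u$-factor and the algebra property of $H^{s-1}(\Om_*^\pm)\hookrightarrow L^\infty$ (valid since $s-1\ge 2$) on the $\Phi$-factors, with the relevant Sobolev norm of $\Phi_f^\pm$ absorbed into $C$. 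The non-integer cases in $[0,s]$ follow by real interpolation. The second assertion is the mirror image since $\Phi_f^{\pm,-1}$ enjoys the same $H^s$ and bi-Lipschitz bounds on $\Om_f^\pm$.

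\noindent\textbf{Step 3: part 3 and the main obstacle.} For $\sigma\in[2,s]$ we have $\sigma>\f32=d/2$, so $H^\sigma(\Om_*^\pm)$ is a Banach algebra, giving $\|uv\|_{H^\sigma(\Om_*^\pm)}\le C\|u\|_{H^\sigma}\|v\|_{H^\sigma}$; applying part 2 to $uv$ then finishes the proof. The main technical point to watch is the fractional-order composition estimate on the graph domain $\Om_*^\pm$: this is handled by composing with a bounded extension operator $H^\sigma(\Om_*^\pm)\to H^\sigma(\R^3)$ whose norm depends only on the Lipschitz character of $\Gamma_*$, thereby reducing matters to composition with a bi-Lipschitz, $H^s$-regular map on $\R^3$, where the Moser/Meyer composition inequality applies. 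The remaining work is purely bookkeeping, ensuring that every constant depends only on $\delta_0$ and $\|f_*\|_{H^{s-\f12}}$, as claimed.
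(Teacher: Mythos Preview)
The paper does not give its own proof of this lemma; it simply states the result and refers the reader to \cite{SWZ}. Your sketch is the standard argument and is in line with what underlies the cited reference: elliptic regularity for $\Phi_f^\pm-\mathrm{Id}$ on $\Om_*^\pm$ (with constants depending only on the geometry of $\Gamma_*$, i.e.\ on $\|f_*\|_{H^{s-\f12}}$), the ensuing bi-Lipschitz and $H^s$ control of $\Phi_f^\pm$ and its inverse, and then chain-rule/Moser composition estimates for parts 1--2 together with the Banach-algebra property of $H^\sigma$, $\sigma>3/2$, for part 3. There is nothing to compare against in the paper itself.
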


\begin{proposition}\label{prop:DNHs}
Let $s\ge 3$ be an integer. If $f\in H^{s+\f12}(\T^2)$, then we have
\beq\label{mhd28}
\|\overline{\mathcal{N}}_f g \|_{H^{s-\f12}}\le C\left(\|f\|_{H^{s+\f12}}\right) \| g \|_{H^{s}(\Om_f^+)}.
\eeq
\end{proposition}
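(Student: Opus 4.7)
My goal is to bound $\overline{\mathcal{N}}_f g = \widehat{\mathcal{N}}_f^+g - \mathcal{N}_f^-\underline{g}$ in $H^{s-1/2}$. A direct elliptic-regularity-plus-trace argument only places each summand individually in $H^{s-3/2}$, so the proof must exploit a cancellation between the two harmonic extensions attached to opposite sides of $\Gamma_f$.

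The plan is to split
\[
\widehat{\mathcal{H}}_f^+g = \mathcal{H}_f^+\underline{g} + w,
\]
where $w$ is the harmonic function in $\Omega_f^+$ with $w|_{\Gamma_f}=0$ and $\partial_3 w|_{\Gamma^+}=\partial_3 g|_{\Gamma^+}$. Applying $-\vN_f\cdot\nabla(\cdot)|_{\Gamma_f}$ yields the identity
\[
\overline{\mathcal{N}}_fg = (\mathcal{N}_f^+ - \mathcal{N}_f^-)\underline{g} - \vN_f\cdot\nabla w|_{\Gamma_f},
\]
and I will estimate the two pieces separately. For the remainder, note that $w$ carries all its nonzero data on $\Gamma^+$, which is uniformly separated from $\Gamma_f$. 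Transferring $w$ to the reference domain via $\Phi_f^+$ (Lemma~\ref{lem:basic}), a cutoff argument near $\Gamma_*$ combined with interior elliptic regularity for the transformed equation, whose coefficients are controlled by $\|f\|_{H^{s+1/2}}$, shows that $w$ is as smooth near $\Gamma_f$ as the regularity of that interface permits, producing
\[
\|\vN_f\cdot\nabla w|_{\Gamma_f}\|_{H^{s-1/2}} \le C(\|f\|_{H^{s+1/2}})\,\|\partial_3 g|_{\Gamma^+}\|_{H^{s-3/2}} \le C(\|f\|_{H^{s+1/2}})\,\|g\|_{H^{s}(\Omega_f^+)}.
\]

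For the principal term $(\mathcal{N}_f^+ - \mathcal{N}_f^-)\underline{g}$, the Dirichlet-Neumann operators $\mathcal{N}_f^\pm$ are associated with the same hypersurface $\Gamma_f$ (merely from opposite sides), so their principal symbols coincide and the difference is of order zero. By paralinearization in the spirit of Alazard-Burq-Zuily, or, more directly, by a commutator argument on the reference domain using Lemma~\ref{lem:basic} together with product estimates in Sobolev spaces, one obtains
\[
\|(\mathcal{N}_f^+ - \mathcal{N}_f^-)\underline{g}\|_{H^{s-1/2}} \le C(\|f\|_{H^{s+1/2}})\,\|\underline{g}\|_{H^{s-1/2}} \le C(\|f\|_{H^{s+1/2}})\,\|g\|_{H^{s}(\Omega_f^+)},
\]
and adding this to the remainder bound completes the proof.

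The main obstacle will be rigorously verifying this order-zero cancellation at the Sobolev threshold $f\in H^{s+1/2}$: a naive term-by-term estimate of the shape derivative of $\mathcal{N}_f^\pm$ around the flat configuration splits into a divergence-form contribution $\mathrm{div}(f\,\nabla \underline{g})$ and a composite $\mathcal{N}_0^+(f\,\mathcal{N}_0^+\underline{g})$, each of which apparently loses two derivatives on $\underline{g}$. Only once one extracts the hidden cancellation between them, either through paradifferential calculus or via careful integration by parts on $\Omega_*^+$, does the full order-zero mapping property of $\mathcal{N}_f^+ - \mathcal{N}_f^-$ become manifest.
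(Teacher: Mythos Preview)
Your proposal is correct and follows essentially the same route as the paper: both decompose $\overline{\mathcal{N}}_f g$ into $(\mathcal{N}_f^+ - \mathcal{N}_f^-)\underline{g}$ plus the contribution of the harmonic function $w=\widehat{\mathcal{H}}_f^+ g-\mathcal{H}_f^+\underline{g}$ (which the paper denotes $\overline{\mathcal{H}}_f^+ g$), bound the latter via interior elliptic regularity exploiting the separation of $\Gamma^+$ from $\Gamma_f$, and treat the former as an order-zero operator. The only cosmetic differences are that the paper estimates $w$ directly in $\Omega_f^+$ via an energy identity and Poincar\'e rather than pulling back to $\Omega_*^+$, and it cites the appendix of \cite{SWZ} for the $\mathcal{N}_f^+-\mathcal{N}_f^-$ bound instead of sketching the paralinearization/commutator argument you outline.
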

\begin{proof}
We write
\[
\overline{\mathcal{N}}_f g = \left(\widehat{\mathcal{N}}_f^+ g - \mathcal{N}_f^+ g \right) + \left( \mathcal{N}_f^+ g - \mathcal{N}_f^- g \right).
\]
The corresponding estimate for the second term on the right hand side has been shown in the appendix of \cite{SWZ}. The first term in fact satisfies
\beq\label{mhd29}
\|\widehat{\mathcal{N}}_f^+ g - \mathcal{N}_f^+ g \|_{H^{s-\f12}}\le C\left(\|f\|_{H^{s+\f12}}\right) \| g \|_{H^{2}(\Om_f^+)}.
\eeq
To prove this, we first note that
\[
\overline{\mathcal{H}}^+_f g = \widehat{\mathcal{H}}^+_f g - \mathcal{H}^+_f g
\]
satisfies
\[
\Delta\overline{\mathcal{H}}^+_f g  = 0\,\, \text{ in }\Om_f^+, \,\, \overline{\mathcal{H}}^+_f g  = 0\,\, \text{ on }\Gamma_f, \,\, \p_3\overline{\mathcal{H}}^+_f g  = \p_3 g \text{ on }\Gamma^+.
\]
It follows that
\[
\int_{\Om_f^+} \left|\nabla\overline{\mathcal{H}}^+_f g \right|^2 dx = \int_{\Gamma^+} \left(\overline{\mathcal{H}}^+_f g \right) \left( \p_3 g \right) dx' \leq \| \overline{\mathcal{H}}^+_f g \|_{L^{2}} \| \p_3 g \|_{L^{2}}
\]
\[
\leq C \| \overline{\mathcal{H}}^+_f g \|_{H^{1}(\Om_f^+)} \|g\|_{H^2(\Om_f^+)} \leq C \| \nabla\overline{\mathcal{H}}^+_f g \|_{L^{2}(\Om_f^+)} \|g\|_{H^2(\Om_f^+)},
\]
where in the last inequality we applied Poincar\'{e}'s inequality to $\overline{\mathcal{H}}^+_f g$. On the other hand, according to standard interior (and boundary near $\Gamma_f$) elliptic estimates, we have
\[
\|\overline{\mathcal{H}}^+_f g\|_{H^{s}(\widetilde{\Om_f^+})} \leq C\left(\|f\|_{H^{s+\f12}}\right) \| \overline{\mathcal{H}}^+_f g \|_{H^{1}(\Om_f^+)}.
\]
Here $\widetilde{\Om_f^+}$ is any sub-domain of $\Om_f^+$ away from $\Gamma^+$, such as
\[
\widetilde{\Om_f^+} =\big\{x\in\Om_f^+ | f(x')< x_3 < 1- \epsilon_0\big\} \,\,\text{ for sufficiently small }\epsilon_0.
\]
We thus conclude the proof of (\ref{mhd29}), and then (\ref{mhd28}) follows.
\end{proof}

Finally we introduce a commutator estimates that will be used frequently.
\begin{lemma}\label{lem:commutator}
For $s>2$, we have
\beno
\big\|[a, \langle\na\rangle^s]u\big\|_{L^2}\le C\|a\|_{H^{s}}\|u\|_{H^{s-1}}.
\eeno
Here $\langle \na\rangle^s$ is the $s$-order derivatives on $\BT$ defined as follows
\[
\widehat{\langle \na\rangle^s f}(\mathbf{k}) = \left( 1 + |\mathbf{k}|^2 \right)^{\f{s}{2}}\hat{f}(\mathbf{k}), \quad \mathbf{k}=(k_1,k_2), \, k_1,k_2\in\mathbb{Z}.
\]
\end{lemma}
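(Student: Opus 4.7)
My approach is the classical Kato-Ponce commutator estimate adapted to the scalar multiplier $\langle\na\rangle^s$ on $\BT$. The plan is to pass to Fourier series, use the mean value theorem on the symbol $\langle\xi\rangle^s$ to quantify the commutation loss, split the resulting frequency interactions into two natural pieces, and finally reduce each to a product estimate controlled by the Sobolev embeddings on the two-torus.

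Concretely, I would write
\beno
\widehat{[a,\langle\na\rangle^s]u}(k)=\sum_{j\in\Z^2}\bigl(\langle k\rangle^s-\langle j\rangle^s\bigr)\hat a(k-j)\,\hat u(j),
\eeno
and use the elementary bound
\beno
\bigl|\langle k\rangle^s-\langle j\rangle^s\bigr|\le C_s\,|k-j|\bigl(\langle k-j\rangle^{s-1}+\langle j\rangle^{s-1}\bigr),
\eeno
which follows from $\na\langle\xi\rangle^s=O(\langle\xi\rangle^{s-1})$ together with the triangle inequality $\langle k\rangle\le\langle k-j\rangle+\langle j\rangle$. Inserting this bound and recognizing each of the two resulting convolutions as the Fourier transform of a pointwise product reduces matters to the Kato-Ponce type inequality
\beno
\big\|[a,\langle\na\rangle^s]u\big\|_{L^2}\le C\bigl(\|\na a\|_{L^\infty}\|u\|_{H^{s-1}}+\|a\|_{H^s}\|u\|_{L^\infty}\bigr).
\eeno
Indeed, the first term on the right comes from the piece with $\langle j\rangle^{s-1}$: in $\ell^2_k$ it reads as a convolution of $|\widehat{\na a}|$ against $\langle j\rangle^{s-1}|\hat u(j)|$, which is exactly the Fourier side of $\|\na a\cdot(\langle\na\rangle^{s-1}u)\|_{L^2}$; the second term, from the piece with $\langle k-j\rangle^{s-1}$, is symmetric.

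The final step is to dispose of the $L^\infty$ norms using Sobolev embedding. Since $\BT$ is two-dimensional, the hypothesis $s>2$ yields both $H^s(\BT)\hookrightarrow W^{1,\infty}(\BT)$ and $H^{s-1}(\BT)\hookrightarrow L^\infty(\BT)$, hence $\|\na a\|_{L^\infty}\le C\|a\|_{H^s}$ and $\|u\|_{L^\infty}\le C\|u\|_{H^{s-1}}$. Combining these with the Kato-Ponce bound gives the claimed estimate.

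The only point requiring any care is the book-keeping of which factor of $\langle k-j\rangle^{s-1}+\langle j\rangle^{s-1}$ is paired with which Fourier coefficient, so that each convolution genuinely represents a pointwise product and H\"older's inequality can be applied; everything else is routine. I do not anticipate a substantive obstacle, since this is essentially the standard proof of the scalar Kato-Ponce commutator estimate, and the dimension and regularity thresholds are exactly matched to $s>2$ on $\BT$.
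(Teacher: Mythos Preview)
The paper does not prove this lemma; it is stated without proof as a standard commutator estimate, so there is no argument in the paper to compare against. Your approach is the standard one and yields the correct conclusion.

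One point deserves tightening. The intermediate Kato--Ponce bound with $L^\infty$ norms does not fall out of the Fourier convolution argument quite as directly as you suggest. After taking absolute values inside the sum, the convolution $\sum_j|k-j|\,|\hat a(k-j)|\,\langle j\rangle^{s-1}|\hat u(j)|$ is the Fourier side of a product $FG$ where $\hat F(m)=|m|\,|\hat a(m)|$ and $\hat G(j)=\langle j\rangle^{s-1}|\hat u(j)|$; but $\|F\|_{L^\infty}$ is not $\|\nabla a\|_{L^\infty}$ in general, since replacing Fourier coefficients by their moduli does not preserve $L^\infty$. The cleanest fix is to bypass the $L^\infty$ detour entirely: apply Young's inequality $\|b*v\|_{\ell^2}\le\|b\|_{\ell^1}\|v\|_{\ell^2}$ (respectively $\le\|b\|_{\ell^2}\|v\|_{\ell^1}$) to each piece, and then Cauchy--Schwarz to control the $\ell^1$ factor by a weighted $\ell^2$ norm, using that $\sum_{m\in\Z^2}\langle m\rangle^{-2(s-1)}<\infty$ precisely when $s>2$. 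This produces $\|a\|_{H^s}\|u\|_{H^{s-1}}$ directly and makes the role of the threshold $s>2$ on $\BT$ completely transparent, without invoking the full Kato--Ponce machinery.
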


\section{Reformulation of the problem}\label{s3}

In this section, we replace the system (\ref{mhd01})-(\ref{mhd7}) by an equivalent formulation, which consists of the (evolution) equations of the following quantities:
\begin{itemize}
\item the height function of the interface: $f$;
\item the scaled normal velocity on the interface: $\theta=\vu\cdot\vN_f$;
\item the vorticity and current in the fluid region: $\vom=\nabla\times\vu$, $\vj=\nabla\times\vh$;
\item the average of tangential part of velocity and magnetic field on the fixed bottom boundary:
\begin{align*}
\beta_i(t)=\int_{\BT}u_i(t,x',-1)dx',\, \gamma_i(t)=\int_{\BT}h_i(t,x',-1)dx', \, i=1,2.
\end{align*}
\end{itemize}

To simplify notations, from now on we drop the minus superscript "-". Hence,
\[
\Om_f = \Om_f^-, \, \Gamma = \Gamma^-, \, \mathcal{N}_f = \mathcal{N}_f^-, \, \mathcal{H}_f = \mathcal{H}_f^-, \, \text{etc.}.
\]

\subsection{Evolution of the interface and the scaled normal velocity}\label{s31}
Let
\begin{align}
\theta(t,x') = \vu(t,x',f(t,x'))\cdot\vN_f(t,x').
\end{align}
Then we have
\begin{align}\label{eq:form:f}
\p_tf(t,x')=\theta(t,x').
\end{align}
Clearly, $(1+|\nabla f|^2)^{-1/2}\theta$ is the normal component of the fluid velocity on the interface.

According to (\ref{res}), for a vector field $\vv=(v_1,v_2,v_3)$ defined in $\Om^{+}_f$  or $\Om_f$ we calculate $\ul{\vv\cdot\nabla\vv}\cdot\vN_f$ as follows:
\begin{align*}
&(\ul{\vv\cdot\nabla\vv})\cdot\vN_f = \ul{v_1}\ul{\partial_1v_j}N_j + \ul{v_2}\ul{\partial_2 v_j}N_j + \ul{v_3} \ul{\partial_3 v_j}N_j\\
&= \ul{v_1} \p_1\ul{v_j}N_j + \ul{v_2}\p_2\ul{v_j}N_j + \left(\ul\vv \cdot \vN_f\right) \left( \ul{\p_3\vv}\cdot\vN_f \right)\\
&=\ul{v_1}\partial_1(\ul{\vv}\cdot\vN_f)+\ul{v_2}\partial_2(\ul{\vv}\cdot\vN_f)-
\ul{v_1}\ul{v_j}\partial_1N_j-\ul{v_2}\ul{v_j}\partial_2N_j + \left(\ul\vv \cdot \vN_f\right) \left( \ul{\p_3\vv}\cdot\vN_f \right).
\end{align*}
Hereafter we use Einstein's notation of summation for repeated indices $i,j=1,2,3$ as well as summation on $i,j=1,2$ in case of making no confusion. From the calculations above, we obtain the following lemma.

\begin{lemma}\label{rel-uh}
For a vector $\vv=(v_1,v_2,v_3)$ defined in $\Om^{+}_f$ or $\Om_f$,
\begin{align}
&(\ul{{\vv}\cdot{\nabla\vv}})\cdot\vN_f- \left(\ul{\partial_3\vv}\cdot\vN_f\right)\left(\ul{\vv}\cdot\vN_f\right)\nonumber\\
&=\ul{v_1}\partial_1(\ul{\vv}\cdot\vN_f)+\ul{v_2}\partial_2(\ul{\vv}\cdot\vN_f)
+\sum_{i,j=1,2}\ul{v_i}\ul{v_j}\partial_i\partial_jf.
\end{align}
\end{lemma}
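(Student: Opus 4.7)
The statement is essentially a bookkeeping identity, and the computation that proves it is actually carried out explicitly in the two displayed lines just above the lemma; the lemma merely records the outcome. My plan is to redo that calculation cleanly, being careful only about the two places where the form of $\vN_f=(-\p_1 f,-\p_2 f,1)$ is used.

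First I would expand $(\ul{\vv\cdot\nabla\vv})\cdot\vN_f=\ul{v_i}\,\ul{\p_i v_j}\,N_j$ (Einstein summation on $i,j=1,2,3$) and split off the $i=3$ term:
\begin{align*}
(\ul{\vv\cdot\nabla\vv})\cdot\vN_f
=\ul{v_1}\,\ul{\p_1 v_j}\,N_j+\ul{v_2}\,\ul{\p_2 v_j}\,N_j+\ul{v_3}\,\ul{\p_3 v_j}\,N_j.
\end{align*}
The trace identity \eqref{res}, namely $\ul{\p_i v_j}=\p_i\ul{v_j}-\ul{\p_3 v_j}\,\p_i f$ for $i=1,2$, lets me replace $\ul{\p_i v_j}$ by $\p_i\ul{v_j}$ in the first two terms at the cost of $-\ul{v_i}\,\ul{\p_3 v_j}\,(\p_i f)\,N_j$. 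Since $-\p_i f=N_i$ for $i=1,2$ and $N_3=1$, summing the correction against the last term gives $(\ul{v_1}N_1+\ul{v_2}N_2+\ul{v_3}N_3)\,\ul{\p_3 v_j}\,N_j=(\ul{\vv}\cdot\vN_f)(\ul{\p_3\vv}\cdot\vN_f)$.

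Next I would use the product rule inside the tangential derivatives: $\p_i\ul{v_j}\,N_j=\p_i(\ul{v_j}N_j)-\ul{v_j}\,\p_i N_j=\p_i(\ul{\vv}\cdot\vN_f)-\ul{v_j}\,\p_i N_j$ for $i=1,2$. Because $N_1=-\p_1 f$, $N_2=-\p_2 f$, $N_3=1$, one has $\p_i N_j=-\p_i\p_j f$ for $j=1,2$ and $\p_i N_3=0$, so
\begin{align*}
-\sum_{i=1,2}\ul{v_i}\,\ul{v_j}\,\p_i N_j=\sum_{i,j=1,2}\ul{v_i}\,\ul{v_j}\,\p_i\p_j f.
\end{align*}
Collecting everything and moving $(\ul{\vv}\cdot\vN_f)(\ul{\p_3\vv}\cdot\vN_f)$ to the left-hand side yields exactly the claimed identity.

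There is really no obstacle here; the only point to watch is keeping track of which summation is over $j=1,2,3$ (needed to assemble $\ul{\vv}\cdot\vN_f$ from the correction term) versus $j=1,2$ (where $\p_i N_j$ is nonzero). No analytic estimate or regularity assumption on $\vv$ is required beyond what makes the traces and tangential derivatives meaningful, so the proof is a one-line application of the chain rule plus the explicit form of $\vN_f$.
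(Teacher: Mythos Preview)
Your proposal is correct and follows precisely the paper's own computation, which is displayed explicitly just above the lemma: expand $(\ul{\vv\cdot\nabla\vv})\cdot\vN_f$, use the trace identity \eqref{res} to convert $\ul{\p_i v_j}$ to $\p_i\ul{v_j}$ (the corrections combining with the $i=3$ term to form $(\ul{\vv}\cdot\vN_f)(\ul{\p_3\vv}\cdot\vN_f)$), and then apply the product rule together with $\p_i N_j=-\p_i\p_j f$ for $j=1,2$. There is nothing to add.
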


By restricting the equation (\ref{mhd01}) to $\Gamma_f$ and taking inner product with $\vN_f$, we deduce from Lemma \ref{rel-uh} (recall $\vh\cdot\vN_f=0$ on $\Gamma_f$) that
\begin{align}
\p_t\theta=&\big(\p_t\vu+\partial_3\vu\p_tf\big)\cdot\vN_f+\vu\cdot\p_t\vN_f\big|_{x_3=f(t,x')}\nonumber\\
=&\big(-\vu\cdot\nabla\vu+\vh\cdot\nabla\vh-\nabla p+\partial_3\vu\p_tf\big)\cdot\vN_f-
\vu\cdot\big(\partial_1\p_tf,\partial_2\p_tf,0\big)\big|_{x_3=f(t,x')} \nonumber \\
=&\big(-(\vu\cdot\nabla)\vu+\partial_3\vu(\vu\cdot\vN_f)\big)\cdot\vN_f+(\vh\cdot\nabla)\vh\cdot\vN_f \nonumber \\
&-\vN_f\cdot\nabla p-\vu\cdot(\partial_1\theta, \partial_2\theta,0)\big|_{x_3=f(t,x')} \nonumber \\
=&-2\big(\ul{u_1}\partial_1\theta+\ul{u_2}\partial_2\theta\big)-\vN_f\cdot\ul{\nabla p}-\sum_{i,j=1,2}
\ul{u_i}\ul{u_j}\partial_i\partial_jf+\sum_{i,j=1,2}\ul{h_i}\ul{h_j}\partial_i\partial_jf.\label{eq:theta-tup}
\end{align}

To give the trace of the pressure $p$ on $\Gamma_f$, we first take divergence to (\ref{mhd01}) to yield
\begin{align}
\Delta p=\mathrm{tr}(\nabla\vh\nabla\vh)-\mathrm{tr}(\nabla\vu\nabla\vu).
\end{align}
Let $p_{\vv_1, \vv_2}$ be the solution of the following mixed boundary value problem:
\beq\label{mhd39}
\left\{
\begin{array}{l}
\Delta p_{\vv_1, \vv_2}= -\mathrm{tr}(\nabla\vv_1\nabla\vv_2)
\text{ in } \Omega_f,\\
p_{\vv_1, \vv_2}=0 \text{ on } \Gamma_f,\,\,
\ve_3\cdot\nabla p_{\vv_1, \vv_2}=0 \text{ on } \Gamma.
\end{array}\right.
\eeq
Since $\ul{p}=p|_{\Gamma_f}=\frac{1}{2}|\hat{\vh}|^2|_{\Gamma_f}$, we obtain the following representation formula for the pressure $p$:
\begin{align}
p=\mathcal{H}_f\ul{p} + p_{\vu, \vu}-p_{\vh, \vh} = \frac{1}{2}\mathcal{H}_f |\ul{\hat{\vh}}|^2+p_{\vu, \vu}-p_{\vh, \vh}.
\end{align}
It follows from (\ref{eq:theta-tup}) that
\begin{align}
\p_t\theta =& -2(\ul{u_1}\partial_1\theta+\ul{u_2}\partial_2\theta\big)-\sum_{i,j=1,2}
\big(\ul{u_i}\ul{u_j}-\ul{h_i}\ul{h_j}\big)\partial_i\partial_jf\nonumber\\
&-\frac12\mathcal{N}_f|\ul{\hat{\vh}}|^2
-\vN_f\cdot\ul{\nabla(p_{\vu,\vu}-p_{\vh,\vh})}.\label{mhd31}
\end{align}
Note that
\begin{align}\label{mhd32}
-\mathcal{N}_f|\ul{\hat{\vh}}|^2 =&
\big(\widehat{\mathcal{N}}_f^+-\mathcal{N}_f\big)|\hat{\vh}|^2 -\vN_f\cdot\ul{\nabla(|\hha|^2-\widehat{\mathcal{H}}^+_f |\hha|^2)}+\vN_f\cdot\ul{\nabla(|\hha|^2)}\\
=& \overline{\mathcal{N}}_f |\hat{\vh}|^2-\vN_f\cdot\ul{\nabla(|\hha|^2-\hat{\mathcal{H}}^+_f |\hha|^2)}+\vN_f\cdot\ul{\nabla(|\hha|^2)}. \nonumber
\end{align}
Furthermore, by Lemma \ref{rel-uh},
\begin{align}\label{mhd33}
\sum_{i,j=1,2}\ul{\hat h_i}\ul{\hat h_j}\partial_i\partial_jf
=(\ul{\hha\cdot\nabla\hha})\cdot\vN_f
=\ha_i\partial_i\ha_jN_j=\ha_i\partial_j\ha_iN_j=\frac12\vN_f\cdot\nabla |\hha|^2,
\end{align}
where we used $\curl\hha = 0$. From (\ref{mhd31})-(\ref{mhd33}), we finally obtain
\begin{align}\nonumber
\p_t\theta=&-
2(\ul{u_1}\partial_1\theta+\ul{u_2}\partial_2\theta\big)-\sum_{i,j=1,2}
\big(\ul{u_i}\ul{u_j}-\ul{h_i}\ul{h_j}
-\ul{\hat h_i}\ul{\hat h_j}\big)\partial_i\partial_jf\\
&-\vN_f\cdot\ul{\nabla(p_{\vu,\vu}-p_{\vh,\vh})}-\frac12\vN_f\cdot\ul{\nabla(|\hha|^2-\hat{\mathcal{H}}^+_f |\hha|^2)}
+\frac12\overline{\mathcal{N}}_f |\hha|^2 .\label{eq:theta}
\end{align}

\subsection{The equations for the vorticity and current}
Let
\begin{align*}
\vom = \nabla\times\vu,\quad \vj = \nabla\times\vh
\end{align*}
be the vorticity and current in $\Om_f$ respectively. Then $\vom, \vj$ satisfy
\begin{align}\label{eq:vorticity-w}
&\p_t\vom+\vu\cdot\nabla\vom-\vh\cdot\nabla\vj=\vom\cdot\nabla\vu-\vj\cdot\nabla\vh \text{ in }Q_T,\\
&\p_t\vj+\vu\cdot\nabla\vj-\vh\cdot\nabla\vom=\vj\cdot\nabla\vu
-\vom\cdot\nabla\vh-2\nabla u_i\times\nabla h_i \text{ in }Q_T. \label{eq:vorticity-h}
\end{align}
Here we used the fact that
\begin{align*}
&\veps^{ijk}\partial_ju_l\partial_lh_k-\veps^{ijk}\partial_jh_l\partial_lu_k\\
&=\veps^{ijk}\partial_ju_l(\partial_lh_k-\partial_kh_l)+\veps^{ijk}\partial_ju_l\partial_kh_l-\veps^{ijk}\partial_jh_l\partial_ku_l
+\veps^{ijk}\partial_jh_l(\partial_ku_l-\partial_lu_k)\\
&=-\vj\cdot\nabla\vu+\vom\cdot\nabla\vh-2\nabla u_i\times\nabla h_i.
\end{align*}

As in \cite{SWZ}, to uniquely recover a divergence free vector field from its curl and normal component(on the bottom boundary) in $\Omega_f$,
we need to prescribe the mean value of its tangential components on the bottom boundary.
To this end, let
\begin{align*}
\beta_i(t)=\int_{\BT}u_i(t,x',-1)dx',\, \gamma_i(t)=\int_{\BT}h_i(t,x',-1)dx', \, i=1,2.
\end{align*}
Thanks to $u_3(t,x',-1)\equiv 0$, we deduce that for $i=1,2$,
\begin{align*}
  \p_tu_i+u_{j}\partial_{j}u_i-h_{j}\partial_{j}h_i-\partial_ip=0\, \text{on}\, \Gamma,
\end{align*}
which yields
\begin{align*}
\p_t\beta_i+\int_{\Gamma}\big(u_{j}\partial_{j}u_i-h_{j}\partial_{j}h_i\big)dx'=0,
\end{align*}
or equivalently
\begin{align}
\beta_i(t)=\beta_i(0)-\int_0^t\int_{\Gamma}\big(u_{j}\partial_{j}u_i-h_{j}\partial_{j}h_i\big)dx'dt.
\end{align}
Similarly, we have
\begin{align}
\gamma_i(t)=\gamma_i(0)-\int_0^t\int_{\Gamma}\big(u_{j}\partial_{j}h_i-h_{j}\partial_{j}u_i\big)dx'dt.
\end{align}

Finally, to solve $\hat{\vh}$, and to recover $\vu,\vh$ from $\vom,\vj$ in $\Omega_f$, one needs to solve two types of div-curl system. We leave it to the next section.

\section{Div-curl system}\label{s4}

In this section, we consider two div-curl systems, which have also been considered in \cite{CS1} for the bounded domain.
Assume that $\Gamma_f$ is a given graph with $f\in H^{s+\f12}(\T^2)$ for $s\ge 2$
satisfying
\[
-(1-c_0)\le f\le (1-c_0).
\]
The first system reads as follows
\beq\label{eq:div-curl}
\left\{\begin{aligned}
&\div\vv=g,\quad \curl \vv=\vom\, \text{ in }\,  \Om_f,\\
&\vv\cdot\vN_f = \vartheta \text{ on } \Gamma_{f}, \,\, \vv\cdot\ve_3 = 0, \,\, \int_{\Gamma} v_i dx'=\alpha_i \text{ on } \Gamma,\, i=1,2,
\end{aligned}\right.
\eeq
where $\vom$ and $\vartheta$ are given functions in $\Om_f$ and $\Gamma_f$ respectively, and $\alpha_i, i=1,2$ are given real numbers. Assume that $\vom,\theta$ satisfy the following compatibility conditions:
\beq\label{mhd41}
\div\vom=0 \,\text{ in }\, \Omega_f, \quad \int_{\Gamma}\om_3 dx'=0,\quad
 \int_{\Om_f} g dx= \int_{\T^2} \vartheta dx'.
\eeq
The following proposition has been proved in \cite{SWZ}.
\begin{proposition}\label{prop:div-curl}
Let $f\in H^{s+\f12},\,s\ge 2$ and $\sigma \in [1,s]$. Assume $g, \vom \in H^{\sigma-1}(\Om_f)$, $\vartheta\in H^{\sigma-\frac12}(\Gamma_f)$
satisfying (\ref{mhd41}). Then there exists a unique $\vv\in H^{\sigma}(\Om_f)$ of (\ref{eq:div-curl}) so that
\begin{align*}
\|\vv\|_{H^{\sigma}(\Om_f)}\le C\left(\|f\|_{H^{s+\f12}}\right)\left( \|g\|_{H^{\sigma-1}(\Om_f)}+
\|\vom\|_{H^{\sigma-1}(\Om_f)}+\|\vartheta\|_{H^{\sigma-\frac12}(\Gamma_f)}+|\alpha_1|+|\alpha_2|\right).
\end{align*}
\end{proposition}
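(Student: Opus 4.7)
My plan is to use a Hodge-type decomposition $\vv = \nabla\phi + \vw$, where the gradient part $\nabla\phi$ carries the prescribed divergence together with the inhomogeneous normal boundary data, while $\vw$ is solenoidal with vanishing normal trace but prescribed curl. For $\phi$ I would solve the mixed Neumann problem
\begin{equation*}
\Delta\phi = g \text{ in } \Om_f, \quad \nabla\phi\cdot\vN_f = \vartheta \text{ on } \Gamma_f, \quad \p_3\phi = 0 \text{ on } \Gamma,
\end{equation*}
whose solvability modulo an additive constant is exactly the compatibility $\int_{\Om_f}g\,dx = \int_{\T^2}\vartheta\,dx'$ appearing in (\ref{mhd41}), via the divergence theorem applied to $\nabla\phi$. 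To obtain the $H^{\sigma+1}$ estimate with constant depending only on $\|f\|_{H^{s+\f12}}$, I would pull the equation back to the reference domain $\Om_*$ via the harmonic coordinate $\Phi_f$ of Section \ref{s2}; there $\Delta$ becomes a symmetric elliptic operator whose coefficients lie in $H^{s-\f12}$, so standard Neumann regularity combined with Lemma \ref{lem:basic} gives the desired bound on $\nabla\phi$.

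For the solenoidal remainder $\vw$, which must satisfy $\div\vw = 0$, $\curl\vw = \vom$ in $\Om_f$, $\vw\cdot\vN_f = 0$ on $\Gamma_f$, $\vw\cdot\ve_3 = 0$ on $\Gamma$, together with the adjusted mean conditions $\int_\Gamma w_i\,dx' = \alpha_i - \int_\Gamma \p_i\phi\,dx'$, I would introduce a vector potential $\vw = \curl\vc{A}$ in the Coulomb gauge $\div\vc{A} = 0$, reducing the problem to the Poisson system $-\Delta\vc{A} = \vom$ with tangential-type boundary conditions on $\vc{A}$. Equivalently, one may set up a variational problem on the closed subspace of solenoidal $L^2$ fields with zero normal trace, using the identity $\|\nabla\vw\|_{L^2}^2 = \|\div\vw\|_{L^2}^2 + \|\curl\vw\|_{L^2}^2 + \text{(boundary terms)}$. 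The kernel of the homogeneous div-curl operator on $\Om_f \simeq \T^2\times(-1,f)$ is the two-dimensional space of harmonic $1$-forms (reflecting the two independent loops in $\T^2$), and this is precisely the ambiguity removed by prescribing the two tangential averages $\alpha_1,\alpha_2$; meanwhile $\div\vom = 0$ is needed for $\vom$ to lie in the range of $\curl$, and $\int_\Gamma\omega_3\,dx' = 0$ is consistent with the identity $\omega_3 = \p_1 w_2 - \p_2 w_1$ on $\Gamma$ that follows from $w_3 = 0$ there.

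The main obstacle will be propagating the higher Sobolev regularity all the way up to the curved interface $\Gamma_f$ with the constant depending only on $\|f\|_{H^{s+\f12}}$, since the coefficients of the pulled-back elliptic operator are only as smooth as $f$ itself. I expect to handle this by a bootstrap performed in the reference domain: commute tangential derivatives $\p_{y_1},\p_{y_2}$ through the transformed elliptic system (producing commutators that are controlled by Moser-type products via Lemma \ref{lem:basic}), and then recover normal derivatives algebraically from the equations $\div\vw = 0$ and $\curl\vw = \vom$ written in the new coordinates. Since the full argument has already been carried out in \cite{SWZ} for exactly the same geometry, a brief sketch with a reference to that work should suffice here.
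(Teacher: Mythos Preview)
The paper gives no proof of this proposition at all: it simply records that the result has already been established in \cite{SWZ} and moves on. Your sketch is therefore strictly more detailed than what the paper provides, and your concluding remark---that a brief sketch together with a reference to \cite{SWZ} should suffice---is precisely the paper's own disposition of the matter.

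Your outline itself is sound. The Hodge-type splitting $\vv=\nabla\phi+\vw$, with $\phi$ solving the full Neumann problem (Neumann on both $\Gamma_f$ and $\Gamma$) under the compatibility $\int_{\Om_f}g\,dx=\int_{\T^2}\vartheta\,dx'$, is the standard reduction; note incidentally that $\int_\Gamma\partial_i\phi\,dx'=0$ by periodicity, so the tangential means for $\vw$ are simply $\alpha_i$ without adjustment. Your identification of the two-dimensional kernel of the homogeneous div-curl system with the harmonic $1$-forms on $\T^2\times(-1,f)$, fixed by the two averages $\alpha_1,\alpha_2$, and your explanation of why $\div\vom=0$ and $\int_\Gamma\omega_3\,dx'=0$ are the right compatibility conditions on the data, are both correct. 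The regularity bootstrap via tangential differentiation in the harmonic-coordinate reference domain is exactly how \cite{SWZ} proceeds, so your plan and the cited argument coincide.
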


The second div-curl system is
\beq\label{eq:div-curl-2}\left\{\begin{aligned}
&\div \hat\vh=\hat g,\quad\curl \hat\vh = \hat\vom\text{ in } \Om_f^+,\\
&\hat\vh\cdot\vN_f = \hat\vartheta\, \text{ on }\, \Gamma_f,   \,\,
\hat\vh\times\ve_3 = \hat{\mathbf{J}} \text{ on } \Gamma^{+}.
\end{aligned}\right.\eeq
Here $\hat{\mathbf{J}}=(\hat{J}_1(x'),\hat{J}_2(x'),\hat{J}_3(x'))$ is a given vector on $\Gamma^+$.
To solve this boundary value problem, we need the following compatibility conditions on $\hat\vom$ and $\hat{\mathbf{J}}$:
\beq\label{mhd42}
\div\hat\vom=0\,\text{ in }\, \Omega_f^+, \quad \p_1\hat{J}_1 + \p_2\hat{J}_2 = \hat\om_3,\quad \hat{J}_3 = 0\, \text{ on }\,\Gamma^+.
\eeq
\begin{proposition}\label{prop:div-curl-2}
Let $f\in H^{s+\f12},\,s\ge 2$ and $\sigma \in [1,s]$. Assume $\hat{g}, \hat\vom \in H^{\sigma-1}(\Om_f^+)$,
$\hat\vartheta, \hat{\mathbf{J}}\in H^{\sigma-\f12}$ satisfying the compatibility condition (\ref{mhd42}).
Then there exists a unique $\hat\vh\in H^{\sigma}(\Om_f^+)$ of the div-curl system (\ref{eq:div-curl-2}) so that
\beq\label{mhd43}
\|\hat\vh\|_{H^{\sigma}(\Omega_f^+)}\le C\left(\|f\|_{H^{s+\f12}}\right)\left(\|\hat{g}\|_{H^{\sigma-1}(\Om_f^+)}+
 \|\hat\vom\|_{H^{\sigma-1}(\Om_f^+)}+\|\hat\vartheta\|_{H^{\sigma-\f12}}+\|\hat{\mathbf{J}}\|_{H^{\sigma-\f12}}\right).
\eeq
\end{proposition}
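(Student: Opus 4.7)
The argument will mirror that of Proposition \ref{prop:div-curl}: I plan to reduce the full div-curl system to a scalar harmonic problem with mixed boundary data, solve the latter on the fixed reference domain $\Om_*^+$ via the harmonic coordinate $\Phi_f^+$ together with Lemma \ref{lem:basic}, and settle uniqueness by a Green's identity argument. The novelty compared with Proposition \ref{prop:div-curl} is the new boundary condition $\hha\times\ve_3=\hat{\mathbf J}$ on $\Ga^+$, which prescribes the tangential part of $\hha$ rather than the normal one.

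First, I would lift the interior data by constructing an auxiliary field $\vG\in H^\sigma(\Om_f^+)$ with $\div\vG=\hat g$ and $\curl\vG=\hat\vom$ in $\Om_f^+$, satisfying
\beno
\|\vG\|_{H^\sigma(\Om_f^+)}\le C(\|f\|_{H^{s+\f12}})\big(\|\hat g\|_{H^{\sigma-1}(\Om_f^+)}+\|\hat\vom\|_{H^{\sigma-1}(\Om_f^+)}\big).
\eeno
For this I would extend $(\hat g,\hat\vom)$ to the full slab $\Om_*$ (preserving $\div\hat\vom=0$), solve a Poisson equation $\Delta\phi=\hat g$ and a Stokes-type problem for a divergence-free vector potential $\vA$ with $-\Delta\vA=\hat\vom$, and set $\vG=\na\phi+\curl\vA$; standard elliptic bounds on the slab, followed by restriction, give the estimate. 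Writing $\vw=\hha-\vG$, the problem reduces to finding a curl- and divergence-free $\vw$ in $\Om_f^+$ with $\vw\cdot\vN_f=\tilde\vartheta:=\hat\vartheta-\vG\cdot\vN_f$ on $\Ga_f$ and $\vw\times\ve_3=\tilde{\mathbf J}:=\hat{\mathbf J}-\vG\times\ve_3$ on $\Ga^+$.

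Since $\Om_f^+$ is homeomorphic to $\BT\times[0,1]$, every curl- and divergence-free field on it has the form $\na\psi+(a,b,0)$ with $\psi$ single-valued periodic and $a,b$ constants. The condition $\vw\times\ve_3=\tilde{\mathbf J}$ on $\Ga^+$ reads $\pa_1\psi+a=-\tilde J_2$ and $\pa_2\psi+b=\tilde J_1$; averaging over $\BT$ determines $a,b$ from the means of $\tilde J_i$, while the closedness relation $\pa_1\tilde J_1+\pa_2\tilde J_2=0$, a direct consequence of \eqref{mhd42} together with $(\curl\vG)_3=\hat\om_3$, determines $\psi|_{\Ga^+}=\Psi$ modulo an additive constant. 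The problem collapses to the mixed harmonic BVP
\beno
\Delta\psi=0\text{ in }\Om_f^+,\quad \psi=\Psi\text{ on }\Ga^+,\quad \vN_f\cdot\na\psi=\tilde\vartheta+a\pa_1f+b\pa_2f\text{ on }\Ga_f,
\eeno
which is uniquely solvable thanks to the Dirichlet piece on $\Ga^+$. Pulling back via $\Phi_f^+$ transforms this into a variable-coefficient elliptic problem on the fixed domain $\Om_*^+$ whose coefficients are controlled by $\|f\|_{H^{s+\f12}}$; standard mixed elliptic regularity combined with Lemma \ref{lem:basic} then produces $\psi\in H^{\sigma+1}(\Om_f^+)$ with the matching estimate. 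Reassembling $\hha=\vG+\na\psi+(a,b,0)$ yields \eqref{mhd43}, and uniqueness follows by setting all data to zero and computing $\int_{\Om_f^+}|\na\psi|^2$ via Green's identity, using $\int_{\Ga^+}\pa_3\psi\,dx'=0$ inherited from $\Delta\psi=0$.

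The main technical obstacle is the lifting step: constructing $\vG$ within the curved domain $\Om_f^+$ with $H^\sigma$ control depending on $\|f\|_{H^{s+\f12}}$ only through the multiplicative constant, and organizing the extension of $\hat\vom$ so as to preserve its divergence-free property. Once $\vG$ is in place, the scalar compatibility on $\Ga^+$ and the reduction to the mixed elliptic problem are routine, and the rest is standard elliptic theory on the fixed reference domain $\Om_*^+$.
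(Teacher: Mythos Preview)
Your approach is correct and parallels the paper's in spirit (reduce to a mixed Dirichlet--Neumann scalar problem), but the execution differs in two places. First, for the lifting you manufacture $\vG$ from scratch by extending $(\hat g,\hat\vom)$ to the whole slab and solving Poisson/vector-Poisson problems there; the paper instead invokes Proposition~\ref{prop:div-curl} directly on $\Om_f^+$ to produce a field $\tilde\vh$ with $\curl\tilde\vh=\hat\vom$, $\div\tilde\vh=0$ and \emph{both} normal traces zero, which immediately sidesteps the divergence-preserving extension you flag as the main obstacle. Second, for the top boundary the paper observes that $\pa_1(\hat J_1-\tilde h_2)+\pa_2(\hat J_2+\tilde h_1)=0$ on $\Ga^+$, writes this closed form as $(\pa_2\hat j,-\pa_1\hat j)$, extends $\hat j$ into the bulk so that it vanishes near $\Ga_f$, and then solves a mixed problem with \emph{homogeneous} Dirichlet data $\phi=0$ on $\Ga^+$ and Neumann data $\vN_f\cdot\na\phi=-\hat\vartheta$ on $\Ga_f$, finally setting $\hat\vh=\tilde\vh+\na\tilde j-\na\phi$; you instead keep inhomogeneous Dirichlet data $\psi=\Psi$ on $\Ga^+$ and add the explicit cohomology term $(a,b,0)$. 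The paper's route is shorter because it recycles Proposition~\ref{prop:div-curl} and never needs a divergence-free extension of $\hat\vom$ across $\Ga_f$; your route is more self-contained and makes the cohomology of $\BT\times[0,1]$ explicit. For uniqueness the paper unfolds the domain to $\R^2\times(f,1)$, writes $\hat\vh=\na\phi$ there, and shows periodicity of $\phi$ from $\hat h_1=\hat h_2=0$ on $\Ga^+$; your Green's-identity argument using $\int_{\Ga^+}\pa_3\psi\,dx'=0$ is an equally valid alternative.
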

\begin{proof}
By Proposition \ref{prop:div-curl}, there is a vector field $\tilde{\vh}$ satisfies:
\begin{align*}\left\{\begin{aligned}
&\div \tilde\vh=0,\quad \curl \tilde\vh = \hat\vom\,\text{ in }\,\Om_f^+,\\
&\tilde\vh\cdot\vN_f = 0\,\text{ on } \Gamma_f,   \,\,
\tilde\vh\cdot\ve_3 = 0\, \text{ on } \Gamma^{+},
\end{aligned}\right.
\end{align*}
and $\|\tilde\vh\|_{H^{\sigma}(\Omega_f^+)}\le C\left(\|f\|_{H^{s+\f12}}\right)\|\hat\vom\|_{H^{\sigma-1}(\Om_f^+)}.$
On the other hand, according to (\ref{mhd42}), $\partial_1(\hat{J}_1-\tilde h_2)+\partial_2(\hat{J}_2+\tilde h_1)=\partial_1\hat{J}_1+\partial_2\hat{J}_2-\hat\om_3=0$, thus, there exists a scalar function $\hat{j}\in H^{\sigma+\f12}$ such that
\[
\hat{J}_1-\tilde h_2 = \p_2\hat{j}, \quad \hat{J}_2+\tilde h_1 = - \p_1\hat{j}.
\]
Let $\tilde{j}\in H^{\sigma+1}(\Om_f^+)$ be an extension of $\hat{j}$ to $\Om_f^+$ such that
\[
\tilde{j}=0 \text{ near }\Gamma_f,\quad \|\tilde{j}\|_{H^{\sigma+1}(\Om_f^+)}\le C\left(\|f\|_{H^{s+\f12}}\right)
\Big(\|\hat\vom\|_{H^{\sigma-1}(\Om_f^+)}+\|\hat{\mathbf{J}}\|_{H^{\sigma-\f12}}\Big).
\]

We consider the following mixed Dirichlet-Neumman problem:
\beq\label{mhd45}\left\{\begin{aligned}
&\Delta\phi = \Delta\tilde{j}-\hat{g}\quad \text{ in }\, \Om_f^+,\\
&\nabla\phi\cdot\vN_f = -\hat\vartheta\, \text{ on }\, \Gamma_f,   \,\,
\phi=0 \,\text{ on }\, \Gamma^{+}.
\end{aligned}\right.\eeq
The existence, uniqueness and regularity of this mixed boundary value problem are standard \cite{Lan}. Moreover,
\beno
\|\phi\|_{H^{\sigma+1}(\Om_f^+)}\le C\left(\|f\|_{H^{s+\f12}}\right)\left(\|\hat{g}\|_{H^{\sigma-1}(\Om_f^+)}+
 \|\hat\vom\|_{H^{\sigma-1}(\Om_f^+)}+\|\hat\vartheta\|_{H^{\sigma-\f12}}+\|\hat{\mathbf{J}}\|_{H^{\sigma-\f12}}\right).
\eeno
Let
\[
\hat{\vh} = \tilde\vh+\nabla\tilde{j} - \nabla\phi.
\]
Obviously, $\div\hat{\vh}=  \Delta\tilde{j}- \Delta\phi=\hat{g}$, $\curl\hat{\vh}=\curl\tilde\vh=\hat\vom$ in $\Om_f^+$. Moreover,
\[
\hat{\vh}\cdot\vN_f = \tilde\vh\cdot\vN_f+ \nabla\tilde{j}\cdot\vN_f - \nabla\phi\cdot\vN_f = \hat\vartheta
\]
due to $\tilde{j}=0$ near $\Gamma_f$. As to the boundary value on $\Gamma^+$, we note that since $\p_1\phi,\p_2\phi = 0$ on $\Gamma^+$,
\[
\hat{\vh}|_{\Gamma^+} \times \ve_3 = \left( \tilde\vh+\nabla\tilde{j} - \nabla\phi\right)|_{\Gamma^+} \times \ve_3
= \left(\tilde h_2+\p_2\hat{j}-\p_2\phi, -\tilde h_1-\p_1\hat{j} + \p_1\phi, 0 \right) = (\hat{J}_1, \hat{J}_2, 0).
\]
We conclude the proof of existence of solution to the system (\ref{eq:div-curl-2}) and the regularity estimate (\ref{mhd43}).

The proof of the uniqueness is similar to the proof of Lemma 5.4 in \cite{SWZ}. We present it here for completeness. It suffices to consider $\hat{g},\hat{\vom}, \hat\vartheta=0$ and $\hat{\mathbf{J}}=0$.
We periodically extend $\Omega_{f}^+$ to be a unbounded domain in $\mathbb{R}^3$, which is denoted by $\Omega_{f,p}^+$.
Then $\hat{\vh}=\na\phi$, where $\phi$ is a function on $\Omega_{f,p}^+$.  Let $\zeta(x)=\phi(x_1+2\pi,x_2,x_3)-\phi(x_1,x_2,x_3)$ for $x\in \Omega_{f,p}^+$. Then $\nabla \zeta(x)=0$,
and thus $\zeta(x)$ is a constant. The condition $\partial_1\phi=\hat h_1=0$ on $\Gamma^{+}$ implies $\zeta(x)\equiv 0$, and then $\phi$ is periodic in $x_1$ in $\Omega_{f,p}^+$.
Similarly, $\phi$ is periodic in $x_2$. Thus $\phi$ can be viewed as a function on $\Omega_{f}^+$ and is a constant on $\Gamma^+$.
Then we can obtain a function $\phi$ in $\Omega_{f}^+$ satisfies
\beno
\Delta\phi=0\quad \text{in}\,\,\Omega_f^+,\quad \vN_f\cdot\na \phi=0\quad \text{on } \Gamma_f,  \quad \phi=\text{constant}\quad \text{on } \Gamma^{+}.
\eeno
Thus, the uniqueness of (\ref{mhd45}) implies $\phi\equiv$ constant, and then $\hat{\vh}\equiv0$.
\end{proof}

\section{Uniform estimates for the linearized system}\label{s5}
Given ${f}(t,x'),{\vu}(t,x),{\vh}(t,x),\hat\vh(t,x)$, we assume there exist positive constants $\delta_0,c_0$ and $L_0,L_1,L_2$ such that for $t\in [0,T]$,
\begin{itemize}\label{ass:stability}
\item $\|(\vu, \vh, \hat\vh)(t)\|_{L^{\infty}(\Gamma_f)}\le L_0$;
\item $\|f(t)\|_{H^{s+\f12}}+\|\pa_t f(t)\|_{H^{s-\f12}}+\|\big(\vu ,\vh\big)(t)\|_{H^{s}(\Om_f)}
  +\|\hat\vh(t)\|_{H^{s}(\Omega_f^+)} + \|\p_t\hat\vh(t)\|_{H^{s-1}(\Omega_f^+)} \le L_1$;
\item $\|(\p_t\vu, \p_t\vh\|_{L^{\infty}(\Gamma_f)}\le L_2$;
\item $\|f(t)-f_*\|_{H^{s-\f12}}\le \delta_0,\, -(1-c_0)\le f(t,x')\le (1-c_0),x'\in\BT$;
\item $\Lambda(\vh, \hat\vh)(t)\ge c_1$;
\end{itemize}
together with
\ben\label{ass:boun}
\left\{
\begin{array}{l}
\div\vu=\div\vh=0 \text{ in } \Om_f,\\
{\vh}\cdot\vN_f=0, \, \p_t f = {\vu}\cdot\vN_f \text{ on }\Gamma_f,\\
u_3=h_3=0 \text{ on } \Gamma,
\end{array}\right.\quad\left\{
\begin{array}{l}
\div\hat\vh=0, \, \curl\hat\vh = 0 \text{ in } \Omega_f^+,\\
{\hat\vh}\cdot\vN_f=0 \text{ on }\Gamma_f,\\
\hat{\vh}\times \ve_3 = \hat{\mathbf{J}}\text{ on }\Gamma^+.
\end{array}\right.
\een

In this section, we linearize the equivalent system derived in Section \ref{s3} around $(f,\vu,\vh,\hat\vh)$, and present the uniform energy estimates for the linearized system. First of all, we give the following lemma on a new formulation of the stability condition (\ref{stab1}).
\begin{lemma}\label{lem:stability}
Under the stability condition (\ref{stab1}), there exists $c_1>0$ such that
\beq\label{stab2}
\Lambda(\vh,\hat\vh) \eqdefa\inf_{x\in\Gamma_t}\inf_{\varphi_1^2+\varphi_2^2=1}(h_1\varphi_1+h_2\varphi_2)^2+(\hat{h}_1\varphi_1+\hat{h}_2\varphi_2)^2\ge c_1.
\eeq
\end{lemma}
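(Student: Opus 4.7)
The whole statement is essentially 2D linear algebra once the geometry is recognized: both $\vh$ and $\hat\vh$ are tangent to $\Gamma_f$ (by $\vh\cdot\vN_f=0=\hat\vh\cdot\vN_f$ in \eqref{ass:boun}), so they live in the $2$-dimensional tangent plane, and (\ref{stab1}) quantifies their failure to be collinear there. I would make this precise by working in the natural tangent frame $\vt_1=(1,0,\partial_1 f)$, $\vt_2=(0,1,\partial_2 f)$, for which the induced metric is $G_{ij}=\delta_{ij}+\partial_i f\,\partial_j f$ with eigenvalues $1$ and $1+|\nabla f|^2\le 1+\|\nabla f\|_{L^\infty}^2$.

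First I would compute, using $h_3=h_1\partial_1 f+h_2\partial_2 f$ (from $\vh\cdot\vN_f=0$), that $\vh\cdot\vt_i=G_{ij}h_j$, and similarly for $\hat\vh$. Writing $\vq=\varphi_1\vt_1+\varphi_2\vt_2$ and setting $\vc b=(h_1,h_2)^{T}$, $\hat{\vc b}=(\hat h_1,\hat h_2)^{T}$ and $M=\vc b\vc b^{T}+\hat{\vc b}\hat{\vc b}^{T}$, one obtains
\begin{equation*}
(\vh\cdot\vq)^2+(\hat\vh\cdot\vq)^2=\varphi^{T}GMG\varphi,\qquad |\vq|^2=\varphi^{T}G\varphi,
\end{equation*}
so that (\ref{stab1}) is equivalent to $G^{1/2}MG^{1/2}\succeq c\,I$, i.e.\ $M\succeq c\,G^{-1}\succeq \frac{c}{1+\|\nabla f\|_{L^\infty}^2}\,I$. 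Since the quantity in (\ref{stab2}) equals $\varphi^{T}M\varphi$, this gives exactly $\Lambda(\vh,\hat\vh)\ge c_1$ with an explicit $c_1$ depending on $c$ and $\|\nabla f\|_{L^\infty}$ (itself controlled by $L_1$ via Sobolev embedding, $s\ge 3$).

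An alternative, perhaps cleaner, finish avoids diagonalizing $G$: after observing $M\succeq 0$, compute $\det M=(\det A)^2=(h_1\hat h_2-h_2\hat h_1)^2$ with $A=(\vc b\;|\;\hat{\vc b})$, and $\mathrm{tr}\,M=|\vc b|^2+|\hat{\vc b}|^2\le L_0^2$. Since $\vh\times\hat\vh$ is parallel to $\vN_f$ (both factors are tangent), its third component gives $(h_1\hat h_2-h_2\hat h_1)^2=|\vh\times\hat\vh|^2/(1+|\nabla f|^2)$. For a symmetric PSD $2\times 2$ matrix, $\lambda_{\min}\ge\det/\mathrm{tr}$, so $\lambda_{\min}(M)\ge |\vh\times\hat\vh|^2/\bigl((1+\|\nabla f\|_{L^\infty}^2)L_0^2\bigr)$, and it remains only to show $|\vh\times\hat\vh|^2$ has a positive lower bound under (\ref{stab1}).

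The only non-routine step, and the expected main obstacle, is this last quantitative implication $(\ref{stab1})\Rightarrow |\vh\times\hat\vh|^2\ge c>0$. Qualitatively it is immediate: if $\vh$ and $\hat\vh$ were parallel at some $x$, then a unit tangent $\vq$ orthogonal to both would kill the infimum. Quantitatively, decompose $\vh=\lambda\hat\vh+\vc e$ with $\vc e\in T_{\Gamma_f}(x)$ orthogonal to $\hat\vh$; choosing $\vq=\vc e/|\vc e|$ (if $\vc e\neq 0$) in (\ref{stab1}) forces $|\vc e|^2\ge c$, and together with $|\hat\vh|\le L_0$ this bounds the sine of the angle between $\vh$ and $\hat\vh$ from below, hence $|\vh\times\hat\vh|\ge c'>0$ uniformly on $\Gamma_f$. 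Combining this with the matrix estimate above yields the lemma.
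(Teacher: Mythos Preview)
Your first approach is correct and is precisely the paper's argument, just made more explicit. The paper takes $(\varphi_1,\varphi_2)$ on the unit circle, solves $G(q_1,q_2)^{T}=(\varphi_1,\varphi_2)^{T}$ for the Gram matrix $G_{ij}=\delta_{ij}+\partial_if\,\partial_jf$, sets $\vq=q_1\vt_1+q_2\vt_2$, and checks from $\vh\cdot\vN_f=\hat\vh\cdot\vN_f=0$ that $h_1\varphi_1+h_2\varphi_2=\vh\cdot\vq$ (and likewise for $\hat\vh$); this is exactly your identity $\vh\cdot\vt_i=G_{ij}h_j$ read in the other direction. The paper then simply appeals to (\ref{stab1}) and compactness, whereas you extract the quantitative bound $c_1=c/(1+\|\nabla f\|_{L^\infty}^2)$ via $M\succeq cG^{-1}$, which is a nice sharpening but the same mechanism.

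Your alternative route through $\lambda_{\min}(M)\ge\det M/\operatorname{tr}M$ together with $\det M=(h_1\hat h_2-h_2\hat h_1)^2=|\vh\times\hat\vh|^2/(1+|\nabla f|^2)$ is genuinely different from the paper and pleasant. One small gap in your final paragraph: bounding the sine of the angle from below (via $|\vc e|\ge\sqrt{c}$ and $|\vh|\le L_0$) does not by itself control $|\vh\times\hat\vh|$ without a \emph{lower} bound on $|\hat\vh|$. This is easily repaired by running your decomposition symmetrically (write $\hat\vh=\mu\vh+\hat{\vc e}$ with $\hat{\vc e}\perp\vh$, so $|\hat\vh|\ge|\hat{\vc e}|\ge\sqrt{c}$), and then $|\vh\times\hat\vh|=|\vc e|\,|\hat\vh|\ge c$ directly; but the step should be stated.
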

\begin{proof}
Let $\vq=(q_1, q_2, q_3)\bot\vN_f$ with $q_3=q_1\partial_1f+q_2\partial_2f$ and $(q_1,q_2)$ determined by
\begin{equation*}
\left(
  \begin{array}{cc}
    1+(\partial_1f)^2 & \partial_1f\partial_2f \\
    \partial_1f\partial_2f & 1+(\partial_2f)^2  \\
  \end{array}
\right)
\left(
         \begin{array}{c}
           q_1 \\
           q_2 \\
         \end{array}
\right)
= \left(
         \begin{array}{c}
           \varphi_1 \\
           \varphi_2 \\
         \end{array}
\right).
\end{equation*}
Then by the fact $\vh\cdot\vN_f=\hat\vh\cdot\vN_f=0$, we get
\beno
h_1\varphi_1+h_2\varphi_2=\sum_{i=1}^3h_i q_i,\,
\hat{h}_1\varphi_1+\hat{h}_2\varphi_2=\sum_{i=1}^3\hat{h}_i q_i,
\eeno
which along with (\ref{stab1}) gives
\beno
\inf_{\varphi_1^2+\varphi_2^2=1} (h_1\varphi_1+h_2\varphi_2)^2+(\hat{h}_1\varphi_1+\hat{h}_2\varphi_2)^2>0.
\eeno
Since the inequality above holds for all $x\in\Gamma_f$, there exists a constant $c_1>0$ such that
\beno
\inf_{\varphi_1^2+\varphi_2^2=1} (h_1\varphi_1+h_2\varphi_2)^2+(\hat{h}_1\varphi_1+\hat{h}_2\varphi_2)^2\ge c_1,
\eeno
which yields (\ref{stab2}).
\end{proof}

For the system (\ref{eq:form:f}) and (\ref{eq:theta}), we introduce the following linearized system:
\begin{equation}\label{sys:linear-H}
\left\{\begin{aligned}
\p_t\bar{f}=&~\bar{\theta},\\
\p_t\bar{\theta}=&-
2(\ul{u_1}\partial_1\bar\theta+\ul{u_2}\partial_2\bar\theta\big)+\sum_{i,j=1,2}
\big(-\ul{u_i}\ul{u_j}+\ul{h_i}\ul{h_j}+\ul{\hat h_i}\ul{\hat h_j}\big)\partial_i\partial_j\bar{f}+\mathfrak{g},
\end{aligned}\right.
\end{equation}
where
\begin{align}
\mathfrak{g}=&-\vN_f\cdot\ul{\nabla(p_{\vu,\vu}-p_{\vh,\vh})}-\frac12\vN_f\cdot\ul{\nabla(|\hha|^2-\widehat{\mathcal{H}}^+_f |\hha|^2)}+\frac12\overline{\mathcal{N}}_f|\hat{\vh}|^2.\label{eq:g-def}
\end{align}
We remark that $\int_{\BT}\bar\theta dx'$ may not vanish since we have performed a linearization.\medskip

Now we introduce the energy functional $E_s$ defined by
\beno
E_s(t) = \big\|(\p_t+\ul{u_i}\partial_i)\Ds\bar{f}\big\|_{L^2}^2
+\frac12\left\|\ul{h_i}\partial_i\Ds\bar{f}\right\|_{L^2}^2
+\frac12\left\|\ul{\hat{h_i}}\partial_i\Ds\bar{f}\right\|_{L^2}^2.
\eeno
Also we define the standard energy
\[
\mathcal{E}_s(t) = \|\bar{f}(t)\|_{H^{s+\f12}}^2+\|\p_t\bar{f}(t)\|_{H^{s-\f12}}^2.
\]
It is easy to see that there exists $C(L_0)>0$ so that
\begin{align}\label{linear:equi-norm-1}
E_s(t)\le C(L_0) \mathcal{E}_s(t).
\end{align}
The stability condition  guarantees that there exists $C(c_1,L_0)$ so that
\begin{align}\label{linear:equi-norm-2}
\mathcal{E}_s(t)
\le C(c_0,L_0)\Big\{E_s(t)+\|\p_t\bar{f}\|_{L^2}^2+\|\bar{f}\|_{L^2}^2\Big\}.
\end{align}

Before to state the energy estimates, we first give the following lemma concerning $\mathfrak{g}$ defined by (\ref{eq:g-def}).
\begin{lemma}\label{lem:non-g}
It holds that
\begin{align*}
\|\mathfrak{g}\|_{H^{s-\f12}}\le C(L_1).
\end{align*}
\end{lemma}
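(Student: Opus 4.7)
The plan is to bound the three terms making up $\mathfrak{g}$ in \eqref{eq:g-def} separately, exploiting elliptic regularity for two mixed Dirichlet--Neumann problems, the component-wise harmonicity of $\hha$, and Proposition~\ref{prop:DNHs}. Throughout I will use that $H^{s-1}(\Om_f)$, $H^{s-1}(\Om_f^+)$ and $H^{s-1/2}(\BT)$ are Banach algebras for $s\ge 3$, and that the harmonic coordinate bounds in Lemma~\ref{lem:basic} let me transfer norms between $\Om_f^\pm$ and the reference domain.

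For the pressure term $\vN_f\cdot\ul{\nabla(p_{\vu,\vu}-p_{\vh,\vh})}$, the pressure correction $p_{\vv,\vw}$ solves the mixed problem \eqref{mhd39} with right-hand side $-\mathrm{tr}(\nabla\vv\nabla\vw)$. Since $\nabla\vu,\nabla\vh\in H^{s-1}(\Om_f)$, the source lies in $H^{s-1}(\Om_f)$ with norm bounded by $C(L_1)$. Standard elliptic regularity for the zero Dirichlet/Neumann mixed problem on the $H^{s+1/2}$-domain $\Om_f$ then gives $p_{\vu,\vu},p_{\vh,\vh}\in H^{s+1}(\Om_f)$. Taking the trace of the gradient on $\Gamma_f$ produces an element of $H^{s-1/2}(\BT)$, and multiplication by $\vN_f\in H^{s-1/2}(\BT)$ (since $f\in H^{s+1/2}$) preserves this regularity.

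For the second term, set $\Psi:=|\hha|^2-\widehat{\mathcal{H}}_f^+|\hha|^2$. The key observation is that $\div\hha=\curl\hha=0$ forces each component of $\hha$ to be harmonic, so
\[
\Delta\Psi=\Delta|\hha|^2=2|\nabla\hha|^2\in H^{s-1}(\Om_f^+),
\]
with norm controlled by $\|\hha\|_{H^s}^2\le C(L_1)$. By the definition of $\widehat{\mathcal{H}}_f^+$, $\Psi$ vanishes on $\Gamma_f$ and has zero normal derivative on $\Gamma^+$. Hence $\Psi$ solves another mixed Dirichlet--Neumann problem, and elliptic regularity yields $\Psi\in H^{s+1}(\Om_f^+)$; the trace/multiplication argument as above then bounds this term in $H^{s-1/2}$. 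The third term $\tfrac12\overline{\mathcal{N}}_f|\hha|^2$ is handled directly by Proposition~\ref{prop:DNHs} applied to $g=|\hha|^2\in H^s(\Om_f^+)$, giving a $C(L_1)$ bound immediately.

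The only genuine subtlety is Term~II: one has to recognize that $\widehat{\mathcal{H}}_f^+$ is \emph{harmonic} (not just a lift), so that the apparent loss of regularity in $\widehat{\mathcal{H}}_f^+|\hha|^2$ is compensated by rewriting everything as a clean elliptic problem for $\Psi$ whose source is $2|\nabla\hha|^2$; this uses $s\ge 3$ to keep the quadratic source in $H^{s-1}$. Terms~I and III are essentially routine once the algebra property and the mixed-problem elliptic estimates are in place.
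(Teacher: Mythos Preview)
Your proof is correct and follows essentially the same route as the paper's: bound the pressure term via elliptic regularity for the mixed problem \eqref{mhd39}, bound the second term by observing that $\hat p:=|\hha|^2-\widehat{\mathcal{H}}_f^+|\hha|^2$ solves an analogous mixed Dirichlet--Neumann problem (the paper simply writes ``similar argument'' where you spell out $\Delta|\hha|^2=2|\nabla\hha|^2$ via harmonicity of $\hha$), and handle the third term by Proposition~\ref{prop:DNHs}. Your version is somewhat more explicit on Term~II, but the argument is the same.
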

\begin{proof}
According to the definition of $p_{\vu,\vu},p_{\vh,\vh}$ (see (\ref{mhd39})), we obtain by standard elliptic estimates that
\begin{align*}
\| \vN_f\cdot\ul{\nabla(p_{\vu,\vu}-p_{\vh,\vh})} \|_{H^{s-\f12}}
\le& C(L_1)\|\ul{\nabla(p_{\vu,\vu}-p_{\vh,\vh})}\|_{H^{s-\f12}}
\le C(L_1)\big\|\nabla(p_{\vu,\vu}, p_{\vh,\vh})\big\|_{H^{s}(\Om_f)}\\
\le& C(L_1)\|(\vu,\vh)\|_{H^s(\Om_f)}\le C(L_1).
\end{align*}
Applying similar argument to $\hat{p} = |\hha|^2-\widehat{\mathcal{H}}^+_f |\hha|^2$ yields the same estimate for the second term in $\mathfrak{g}$. Finally, the same estimate for the third term follows from (\ref{mhd28}) in Proposition \ref{prop:DNHs}.\end{proof}

\begin{proposition}\label{prop:f-L}
Given initial data $\bar f_0\in H^{s+\f12}$, $\bar \theta_0\in H^{s-\f12}$, there exists a unique solution $(\bar{f},\bar{\theta})\in C\big([0,T];H^{s+\f12}\times H^{s-\f12}\big)$ to the system (\ref{sys:linear-H}) so that
\beq\label{mhd51}
\sup_{t\in[0,T]}\mathcal{E}_s(t) \le C(c_1,L_0)\left( 1+\|\bar\theta_0\|_{H^{s-\f12}}^2 + \|\bar f_0\|_{H^{s+\f12}}^2\right)e^{C(c_1, L_1,L_2)T}.
\eeq
\end{proposition}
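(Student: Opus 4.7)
The plan is to reduce the first-order linearized system \eqref{sys:linear-H} to a second-order strictly hyperbolic wave equation for $\bar f$, then close the energy with the tailored functional $E_s$. Setting $D_t:=\p_t+\ul{u_i}\p_i$ and using $\p_t\bar f=\bar\theta$, I compute
\beno
D_t^2\bar f=\p_t\bar\theta+2\ul{u_i}\p_i\bar\theta+\ul{u_i}\ul{u_j}\p_i\p_j\bar f+(D_t\ul{u_i})\p_i\bar f.
\eeno
Substituting the second equation of \eqref{sys:linear-H} produces the crucial cancellation of the $u$-quadratic piece, leaving
\beno
D_t^2\bar f=\sum_{i,j=1,2}\big(\ul{h_i}\ul{h_j}+\ul{\hat h_i}\ul{\hat h_j}\big)\p_i\p_j\bar f+(D_t\ul{u_i})\p_i\bar f+\mathfrak{g}.
\eeno
By Lemma \ref{lem:stability}, the principal symbol $(\ul{h_i}\xi_i)^2+(\ul{\hat h_i}\xi_i)^2\ge c_1|\xi|^2$ is uniformly elliptic on the tangential frequencies, confirming strict hyperbolicity on $\BT$.

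For the energy estimate, I would apply $\Lambda:=\langle\nabla\rangle^{s-\f12}$ to the reformulated equation and compute $\f{d}{dt}E_s$. The kinetic piece gives $\f{d}{dt}\|D_t\Lambda\bar f\|_{L^2}^2=2\langle D_t^2\Lambda\bar f, D_t\Lambda\bar f\rangle + \langle(\div\ul\vu)D_t\Lambda\bar f,D_t\Lambda\bar f\rangle$, while each magnetic piece, after integration by parts in $\p_i$, produces contributions of the form $-\langle\ul{h_i}\ul{h_j}\p_i\p_j\Lambda\bar f,\p_t\Lambda\bar f\rangle$ and analogously for $\hat\vh$. Substituting $\Lambda$ applied to the wave equation from Step 1 into the $D_t^2\Lambda\bar f$ factor, the top-order symbol pieces cancel, leaving residuals from: (i) commutators such as $[\Lambda,\ul{u_i}]\p_i\bar\theta$ and $[\Lambda,\ul{h_i}\ul{h_j}]\p_i\p_j\bar f$, controlled by Lemma \ref{lem:commutator}; (ii) time-derivatives $(\p_t\ul{h_i})\p_i\Lambda\bar f\cdot\ul{h_j}\p_j\Lambda\bar f$, bounded using $\|\p_t\ul\vh\|_{L^\infty}\le C(L_1,L_2)$; and (iii) $\Lambda\mathfrak{g}$, controlled by Lemma \ref{lem:non-g}. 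The net inequality reads $\f{d}{dt}E_s\le C(c_1,L_1,L_2)(E_s+1)$.

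A direct low-order energy estimate on \eqref{sys:linear-H} also yields $\f{d}{dt}(\|\bar f\|_{L^2}^2+\|\p_t\bar f\|_{L^2}^2)\le C(L_1)(\|\bar f\|_{L^2}^2+\|\p_t\bar f\|_{L^2}^2+\mathcal{E}_s+1)$, where the $\mathcal{E}_s$ on the right arises only from a $\|\bar f\|_{H^2}$ bound and is controlled back through the equivalences \eqref{linear:equi-norm-1}--\eqref{linear:equi-norm-2}. Grönwall's lemma applied to the combined quantity then produces \eqref{mhd51}. Uniqueness is immediate from applying the same estimate to the difference of two solutions, which satisfies \eqref{sys:linear-H} with $\mathfrak{g}\equiv 0$ and vanishing initial data. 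For existence, I would mollify the coefficients $\ul\vu,\ul\vh,\ul{\hat\vh}$ and the forcing by Friedrichs mollifiers $J_\e$, solve the smoothed linear system as an ODE in $H^{s+\f12}\times H^{s-\f12}$ via Picard iteration, and pass $\e\to 0$ using the uniform bounds from the preceding steps.

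The hard part is executing the energy computation cleanly: one must verify that every residual contribution, after the principal cancellation enforced by the wave-equation structure, genuinely falls at the level of $E_s$ itself and not higher. At the threshold $s\ge 3$ this is tight; it requires careful use of Lemma \ref{lem:commutator} together with the harmonic-extension machinery of Section \ref{s2} when rewriting quantities like $\p_t\ul{\hat h_i}$ in terms of the assumed control $\|\p_t\hat\vh\|_{H^{s-1}(\Om_f^+)}\le L_1$, since $\hat\vh$ is only given indirectly through the div-curl system on $\Om_f^+$.
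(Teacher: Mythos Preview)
Your proposal is correct and follows essentially the same approach as the paper. The only organizational difference is that you first rewrite the system as the material-derivative wave equation $D_t^2\bar f=(\ul{h_i}\ul{h_j}+\ul{\hat h_i}\ul{\hat h_j})\p_i\p_j\bar f+\text{l.o.t.}$ (which the paper displays in the introduction as \eqref{eq:theta-intro} but does not use explicitly in the proof), whereas the paper works directly with the $\p_t^2\bar f$ equation and lets the $\ul{u_i}\ul{u_j}$ cancellation emerge through its $I_1$--$I_2$ splitting; the resulting commutator, time-derivative, and forcing terms, and the final Gronwall step, are identical.
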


\begin{proof}
We only present the uniform estimates, which ensure the existence and uniqueness of the solution.
Using the fact that
\begin{align*}
\pa_t^2\bar f=-2\sum_{i=1,2}\ul{u_i}\pa_i\pa_t\bar f+\sum_{i,j=1,2}(-\ul{u_i}\ul{u_j}+\ul{h_i}\ul{h_j}+\ul{\hat{h_i}}\ul{\hat{h}_j})\partial_i\partial_j\bar f+\mathfrak{g},
\end{align*}
a direct calculation shows that
\begin{align*}
&\frac12\frac{d}{dt}\left\|(\p_t+\ul{u_i}\partial_i)\Ds\bar{f}\right\|_{L^2(\BT)}^2\\
&=\Big\langle(\p_t+\ul{u_i}\partial_i)\Ds\bar{f}, \Ds\partial_{t}^2\bar{f}+\ul{u_i}\partial_i(\Ds\p_t\bar{f})
+\p_t\ul{u_i}\partial_i\Ds\bar{f}\Big\rangle\\
&=\Big\langle(\p_t+\ul{u_i}\partial_i)\Ds\bar{f}, \Ds\big(-2\ul{u_i}\partial_i\p_t\bar{f}
-\ul{u}_i\ul{u_j}\partial_i\partial_j\bar{f}+(\ul{h_i}\ul{h_j}+\ul{\hat{h_i}}\ul{\hat{h_j}})\partial_i\partial_j\bar{f}\big)\Big\rangle\\
\quad&+\Big\langle(\p_t+\ul{u_i}\partial_i)\Ds\bar{f}, \Ds
\mathfrak{g}+\ul{u_i}\partial_i(\Ds\p_t\bar{f})
+\p_t\ul{u_i}\partial_i\Ds\bar{f}\Big\rangle\\
&=\Big\langle(\p_t+\ul{u_i}\partial_i)\Ds\bar{f}, -\ul{u_i}\partial_i\Ds\p_t\bar{f}\Big\rangle\\
&\quad+\Big\langle(\p_t+\ul{u_i}\partial_i)\Ds\bar{f},-\ul{u_i}\ul{u_j}\partial_i\partial_j\Ds\bar{f}
+(\ul{h_i}\ul{h_j}+\ul{\hat{h_i}}\ul{\hat{h_j}})
\partial_i\partial_j\Ds\bar{f})\Big\rangle\\
&\quad+2\Big\langle(\p_t+\ul{u_i}\partial_i)\Ds\bar{f},\big[\ul{u_i}, \Ds\big]\partial_i\p_t\bar{f}\Big\rangle\\
&\quad+\Big\langle(\p_t+\ul{u_i}\partial_i)\Ds\bar{f},\big[\ul{u_i}\ul{u_j}-\ul{h_i}\ul{h_j}
-\ul{\hat{h_i}}\ul{\hat{h}_j},\Ds\big]\partial_i\partial_j\bar{f})\Big\rangle\\
&\quad+\Big\langle(\p_t+\ul{u_i}\partial_i)\Ds\bar{f},\Ds
\mathfrak{g}+\p_t\ul{u_i}\partial_i\Ds\bar{f}\Big\rangle \triangleq I_1 + \cdots + I_5.
\end{align*}
It follows from Lemma \ref{lem:commutator} that
\begin{align*}
I_3\le&2\|(\p_t+\ul{u_i}\partial_i)\Ds\bar{f}\|_{L^2}\big\|\big[\ul{u_i}, \Ds\big]\partial_i\p_t\bar{f}\big\|_{L^2}\\
\le& CE_s(t)^\f12\|\ul{\vu}\|_{H^{s-\f12}}\|\pa_t\bar f\|_{H^{s-\f12}},
\end{align*}
as well as
\beno
I_4\le CE_s(t)^\f12\Big(\|\ul{\vu}\|_{H^{s-\f12}}^2+\|\ul{\vh}\|_{H^{s-\f12}}^2
+\|\ul{\hat{\vh}}\|_{H^{s-\f12}}^2\Big)\|\bar f\|_{H^{s+\f12}}.
\eeno
Also we have
\beno
I_5\le E_s(t)^\f12\Big(\|\mathfrak{g}\|_{H^{s-\f12}}+\|\p_t\vu\|_{L^\infty}\|\bar f\|_{H^{s+\f12}}\Big).
\eeno

We get by integration by parts that
\begin{align*}
&\Big\langle\p_t\Ds\bar{f}, ~-\ul{u_i}\partial_i\Ds\p_t\bar{f}\Big\rangle
\le \|\partial_i\ul{u_i}\|_{L^\infty}\|\p_t\bar{f}\|_{H^{s-\f12}}^2,\\
&\Big\langle \ul{u_i}\partial_i\Ds\bar{f}, ~-\ul{u_i}\partial_i\Ds\p_t\bar{f}\Big\rangle
+\frac12\frac{d}{dt}\|\ul{u_i}\partial_i\Ds\bar{f}\|_{L^2}^2\\
&=\Big\langle \ul{u_i}\partial_i \Ds\bar{f},\p_t\ul{u_i}\partial_i\Ds\bar{f}\Big\rangle\le
\|\vu\|_{L^\infty} \|\p_t\vu\|_{L^\infty}\|\bar{f}\|_{H^{s+\f12}}^2,
\end{align*}
which give rise to
\beno
I_1\le -\frac12\frac{d}{dt}\|\ul{u_i}\partial_i\Ds\bar{f}\|_{L^2}^2+\big(1+\|\vu\|_{W^{1,\infty}}+\|\p_t\vu\|_{L^\infty}\big)^2
\Big(\|\bar{f}\|_{H^{s+\f12}}^2+\|\p_t\bar{f}\|_{H^{s-\f12}}^2\Big).
\eeno
Similarly, we have
\begin{align*}
&\Big\langle\p_t\Ds\bar{f}, -\ul{u_i}\ul{u_j}\partial_i\partial_j\Ds\bar{f}\Big\rangle
-\frac12\frac{d}{dt}\|\ul{u_i}\partial_i\Ds\bar{f}\|_{L^2}^2\\
&=-\Big\langle \ul{u_i}\partial_i\Ds\bar{f}, ~\p_t\ul{u_i}
\partial_i\Ds\bar{f}\Big\rangle+\Big\langle\Ds\p_t\bar{f}, ~\partial_i(\ul{u_i}\ul{u_j})\partial_j\Ds\bar{f}\Big\rangle\\
&\le\|\vu\|_{L^\infty}\big(\|\p_t\vu\|_{L^\infty}+\|\nabla \vu\|_{L^\infty}\big)
\Big(\|\bar{f}\|_{H^{s+\f12}}^2+\|\p_t\bar{f}\|_{H^{s-\f12}}^2\Big),
\end{align*}
and
\begin{align*}
&\Big\langle \ul{u_k}\partial_k\Ds\bar{f}, -\ul{u_i}\ul{u_j}\partial_i\partial_j\Ds\bar{f}\Big\rangle\\
&=\Big\langle \partial_i(\ul{u_k}\ul{u_i}\ul{u_j})\partial_k\Ds\bar{f}, \partial_j\Ds\bar{f}\Big\rangle
-\Big\langle \ul{u_k}\ul{u_i}\ul{u_j}\partial_k\partial_i\Ds\bar{f},\partial_j\Ds\bar{f}\Big\rangle\\
&=\Big\langle \partial_i(\ul{u_k}\ul{u_i}\ul{u_j})\partial_k\Ds\bar{f}, \partial_j\Ds\bar{f}\Big\rangle
-\Big\langle \ul{u_k}\partial_k(\ul{u_i}\partial_i\Ds\bar{f}),\ul{u_j}\partial_j\Ds\bar{f}\Big\rangle\\
&\quad+\Big\langle \ul{u_k}(\partial_k\ul{u_i})\partial_i\Ds\bar{f}), \ul{u_j}\partial_j\Ds\bar{f}\Big\rangle
\le C \|\vu\|^2_{L^\infty}\|\nabla \vu\|_{L^\infty}\|\bar{f}\|_{H^{s+\f12}}^2,
\end{align*}
as well as
\begin{align*}
&\Big\langle\p_t\Ds\bar{f}, \ul{h_i}\ul{h_j}\partial_i\partial_j\Ds\bar{f}\Big\rangle\\
&\le-\frac12\frac{d}{dt}\|\ul{h_i}\partial_i\Ds\bar{f}\|_{L^2}^2+\|\vh\|_{L^\infty}
\big(\|\p_t\vh\|_{L^\infty}+\|\nabla\vh\|_{L^\infty}\big)
\Big(\|\bar{f}\|_{H^{s+\f12}}^2+\|\p_t\bar{f}\|_{H^{s-\f12}}^2\Big),\\
&\Big\langle\p_t\Ds\bar{f}, \ul{\hat{h_i}}\ul{\hat{h_j}}\partial_i\partial_j\Ds\bar{f}\Big\rangle\\
&\le-\frac12\frac{d}{dt}\|\ul{\hat{h_i}}\partial_i\Ds\bar{f}\|_{L^2}^2+\| {\hat\vh}\|_{L^\infty}
\big(\|\p_t {\hat\vh}\|_{L^\infty}+\|\nabla  {\hat\vh}\|_{L^\infty}\big)
\Big(\|\bar{f}\|_{H^{s+\f12}}^2+\|\p_t\bar{f}\|_{H^{s-\f12}}^2\Big).
\end{align*}
Thus, we obtain
\begin{align*}
I_2\le& \frac12\frac{d}{dt}\|\ul{u_i}\partial_i\Ds\bar{f}\|_{L^2}^2
-\frac14\frac{d}{dt}\|\ul{h_i}\partial_i\Ds\bar{f}\|_{L^2}^2-\frac14\frac{d}{dt}\|\ul{\hat{h_i}}\partial_i\Ds\bar{f}\|_{L^2}^2\\
&+C\big(1+\|( {\vu}, {\vh},  {\hat\vh})\|_{W^{1,\infty}}+\|( \p_t{\vu}, \p_t{\vh}, \p_t{\hat\vh})\|_{L^\infty}\big)^3\Big(\|\bar{f}\|_{H^{s+\f12}}^2+\|\p_t\bar{f}\|_{H^{s-\f12}}^2\Big).
\end{align*}

Collecting these estimates of $I_1,\cdots, I_5$ above, we conclude that
\begin{align*}
\frac{d}{dt}E_s(t)\le \|\mathfrak{g}\|_{H^{s-\f12}}^2 + C(L_0)\big(1+\|(\ul{\vu},\ul{\vh}, \ul{\hat\vh})\|_{H^{s-\f12}}+\|(\p_t{\vu}, \p_t{\vh},\p_t{\hat\vh})\|_{L^\infty}\big)^3\mathcal{E}_s(t).
\end{align*}
On the other hand, it is obvious that
\begin{align*}
\frac{d}{dt}\big(\|\p_t\bar{f}\|_{L^2}^2+\|\bar{f}\|_{L^2}^2\big)\le C(L_0)\mathcal{E}_s(t)
+\|\mathfrak{g}\|_{L^2}^2.
\end{align*}
Then by (\ref{linear:equi-norm-2}), we deduce that
\begin{align*}
\mathcal{E}_s(t)\le & C(c_1,L_0)\Big(\|\bar\theta_0\|_{H^{s-\f12}}^2+\|\bar f_0\|_{H^{s+\f12}}^2+\int_0^t\|\mathfrak{g}(\tau)\|_{H^{s-\f12}}^2d\tau\\
&+\int_0^t\big(1+\|(\ul{\vu},\ul{\vh},\ul{\hat\vh})(\tau)\|_{H^{s-\f12}}+\|\p_t( {\vu}, {\vh}, {\hat\vh})(\tau)\|_{L^\infty}\big)^3\mathcal{E}_s(\tau)d\tau\Big),
\end{align*}
which together with Lemma \ref{lem:basic} gives rise to
\begin{align*}
\mathcal{E}_s(t)\le C(c_1,L_0)\Big(&\|\bar\theta_0\|_{H^{s-\f12}}^2+\|\bar f_0\|_{H^{s+\f12}}^2+\int_0^t\|\mathfrak{g}(\tau)\|_{H^{s-\f12}}^2d\tau+C(L_1,L_2)\int_0^t\mathcal{E}_s(\tau)d\tau\Big).
\end{align*}
The desired estimate (\ref{mhd51}) follows from Lemma \ref{lem:non-g} and Gronwall's inequality.
\end{proof}

For the vorticity and current system (\ref{eq:vorticity-w})-(\ref{eq:vorticity-h}),
we introduce the following linearized system:
\begin{align}\label{eq:vorticity-w-L}
&\p_t\bar\vom+\vu\cdot\nabla\bar\vom-\vh\cdot\nabla\bar\vj
=\bar\vom\cdot\nabla\vu-\bar\vj\cdot\nabla\vh \text{ in }Q_T,\\
\label{eq:vorticity-h-L}
&\p_t\bar\vj+\vu\cdot\nabla\bar\vj-\vh\cdot\nabla\bar\vom
=\bar\vj\cdot\nabla\vu-\bar\vom\cdot\nabla\vh-2\nabla u_i\times\nabla h_i \text{ in }Q_T,
\end{align}
together with the initial data
\beq\label{mhd53}
\bar\vom(0,x) = \bar\vom_0(x), \quad \bar\vj(0,x) = \bar\vj_0(x), \, x\in \Om_f.
\eeq
The following proposition can be proved in a standard way(see \cite{SWZ}).
\begin{proposition}\label{prop:vorticity}
Given $\bar\om_0,\bar\vj_0\in H^{s-1}(\Om_f)$, there exists a unique solution $(\bar\vom,\bar\vj)$ to the initial value problem (\ref{eq:vorticity-w-L})-(\ref{mhd53}) such that
\begin{align*}
\sup_{t\in[0,T]}&\Big(\|\bar\vom(t)\|^2_{H^{s-1}(\Omega_f)}+\|\bar\vj(t)\|^2_{H^{s-1}(\Omega_f)}\Big) \le \Big(1+\|\bar\vom_0\|_{H^{s-1}(\Omega_0)}^2+\|\bar\vj_0\|_{H^{s-1}(\Omega_0)}^2\Big)e^{C(L_1)T}.
\end{align*}
Moreover, it holds that
\begin{align*}
\frac{d}{dt}\int_{\Gamma}\bar\om_3dx'=0,\quad \frac{d}{dt}\int_{\Gamma}\bar\xi_3dx'=0.
\end{align*}
\end{proposition}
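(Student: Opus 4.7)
The plan is to view (\ref{eq:vorticity-w-L})--(\ref{eq:vorticity-h-L}) as a linear symmetric hyperbolic system for $W=(\bar\vom,\bar\vj)$ on the moving domain $\Om_f$ and reduce to a fixed domain via the harmonic coordinate $\Phi_f$ of Section \ref{s2}. Writing the system in components with $W=(\bar\om_1,\bar\om_2,\bar\om_3,\bar\xi_1,\bar\xi_2,\bar\xi_3)^T$, its principal part is $\p_tW+A_k\p_kW$ with
\[
A_k(t,x)=\begin{pmatrix} u_k I & -h_k I\\ -h_k I & u_k I\end{pmatrix},
\]
where $I$ is the $3\times 3$ identity; $A_k$ is symmetric. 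Setting $\widetilde W(t,y)=W(t,\Phi_f(t,y))$ on $\Om_*^-$ and using the chain rule, symmetry is preserved (the new coefficient of $\p_{y_l}$ is a real combination of the matrices $A_k-(\p_t\Phi_f^k)I$, each still symmetric), so the transformed system has the form $\p_t\widetilde W+\widetilde A_l\p_{y_l}\widetilde W=\widetilde B\widetilde W+\widetilde F$ on the fixed domain $\Om_*^-$, with $\widetilde A_l$ symmetric, $\widetilde B$ gathering the $\nabla\vu,\nabla\vh$ zeroth order terms, and $\widetilde F$ the pull-back of $(0,-2\nabla u_i\times\nabla h_i)$ which is bounded by $C(L_1)$ in $H^{s-1}(\Om_*^-)$ by Lemma \ref{lem:basic}.

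The boundary $\p\Om_*^-$ is uniformly characteristic and no boundary conditions on $\widetilde W$ are needed: on $\Gamma_*$ the relative normal speed $(\vu-\p_t\Phi_f)\cdot\vN_f=\vu\cdot\vN_f-\p_tf$ vanishes by (\ref{mhd4}), and $\vh\cdot\vN_f=0$ by (\ref{mhd3}); on $\Gamma^-$ both vanish thanks to $u_3=h_3=0$. Hence classical Friedrichs theory for symmetric hyperbolic systems with characteristic boundary (via the Galerkin or mollification scheme) yields a unique solution $\widetilde W\in C([0,T];H^{s-1}(\Om_*^-))$, and Lemma \ref{lem:basic} transfers both the solution and the estimate back to $\Om_f$.

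For the quantitative bound I would run the energy estimate directly on $\Om_f$. Differentiating $\tfrac12\int_{\Om_f}(|\bar\vom|^2+|\bar\vj|^2)\,dx$ and applying Reynolds' transport formula produces a surface integral on $\Gamma_f$ with factor $\vu\cdot\vN_f$, which is exactly cancelled by the boundary contribution of $\int\vu\cdot\nabla(|\bar\vom|^2+|\bar\vj|^2)$ (using $\div\vu=0$ and $u_3|_{\Gamma^-}=0$); the cross terms combine to $\int(\vh\cdot\nabla)(2\bar\vom\cdot\bar\vj)\,dx$, which vanishes after integration by parts thanks to $\div\vh=0$, $\vh\cdot\vN_f|_{\Gamma_f}=0$ and $h_3|_{\Gamma^-}=0$. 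Commuting with $\p_y^\alpha$ on the fixed domain $\Om_*^-$ for $|\alpha|\le s-1$ and invoking standard commutator and Moser estimates bounds everything by $C(L_1)$ times the energy plus $\|\widetilde F\|_{H^{s-1}}^2$, and Gronwall's inequality delivers the stated bound. The main obstacle is the usual bookkeeping of surface terms for moving-domain characteristic problems, but the structural identities above make those terms vanish at top order.

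Finally, for the conservation laws, restrict the third component of (\ref{eq:vorticity-w-L}) to $\Gamma$. Since $u_3=h_3=0$ identically on $\Gamma$ their tangential derivatives vanish there, while $\div\vu=\div\vh=0$ gives $\p_3u_3=-\p_1u_1-\p_2u_2$ and $\p_3h_3=-\p_1h_1-\p_2h_2$; substituting turns the equation for $\bar\om_3$ on $\Gamma$ into the planar conservation form
\[
\p_t\bar\om_3+\p_1(u_1\bar\om_3-h_1\bar\xi_3)+\p_2(u_2\bar\om_3-h_2\bar\xi_3)=0,
\]
so $\tfrac{d}{dt}\int_{\Gamma}\bar\om_3\,dx'=0$ after integration over $\BT$. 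The same computation applied to the $\bar\xi_3$-equation produces an extra source $-2(\nabla u_i\times\nabla h_i)_3=-2\p_1(h_i\p_2u_i)+2\p_2(h_i\p_1u_i)$, whose integral over $\BT$ vanishes, yielding $\tfrac{d}{dt}\int_{\Gamma}\bar\xi_3\,dx'=0$.
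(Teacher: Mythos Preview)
Your argument is correct and is precisely the ``standard'' one the paper alludes to: the paper does not give a proof here but simply refers to \cite{SWZ}, so you have supplied exactly what is omitted. The symmetric-hyperbolic structure, the vanishing of the boundary matrix on both $\Gamma_f$ and $\Gamma$ (yielding a purely characteristic boundary and hence no need for boundary conditions), the $L^2$ energy identity with Reynolds transport cancelling the $\Gamma_f$ contribution, the commutation with $\p_y^\alpha$ on the fixed reference domain, and the divergence-form rewriting on $\Gamma$ for the conserved integrals are all the standard ingredients one finds in \cite{SWZ}.

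Two minor remarks. First, in Section~\ref{s5} the relations $\p_tf=\vu\cdot\vN_f$, $\vh\cdot\vN_f=0$, $u_3=h_3=0$ are part of the standing assumptions (\ref{ass:boun}) on the given coefficients, not the nonlinear identities (\ref{mhd3})--(\ref{mhd4}); the content is the same, only the reference should point to (\ref{ass:boun}). Second, when you pull back to $\Om_*^-$ and commute with $\p_y^\alpha$, the coefficients $\widetilde A_l$ involve $(D\Phi_f)^{-1}$ as well as $\vu\circ\Phi_f,\vh\circ\Phi_f$; it is worth noting that $f\in H^{s+\frac12}$ gives $\Phi_f\in H^{s+1}(\Om_*^-)$ so that $\widetilde A_l\in H^{s}(\Om_*^-)$, which is what makes the commutator estimates close at the $H^{s-1}$ level. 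With these small clarifications your proof is complete.
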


Finally, the magnetic field $\hat\vh$ in the vacuum is considered as a secondary variable computed from $\Gamma_{f}$ and $\hat{\mathbf{J}}$ by solving the following div-curl system:
\begin{equation}\label{mhdhath}
\left\{\begin{aligned}
&\curl {\hat{\vh}}=0,\,\,\div {\hat{\vh}} = 0 \text{ in } \Om_{f}^+,\\
&{\hat{\vh}}\cdot\vN_{f} = 0  \text{ on }\,\Gamma_{f}, \,  {\hat{\vh}}\times\ve_3 =\hat{\mathbf{J}} \text{ on } \Gamma^+,
\end{aligned}\right.
\end{equation}
for any fixed time $t\ge 0$. According to Proposition \ref{prop:div-curl-2},
\beq\label{mhd55}
\left\|{\hat{\vh}}(t)\right\|_{H^{s}(\Omega_{f}^+)} \le C\left(\|f(t)\|_{H^{s+\f12}}\right)\|\hat{\mathbf{J}}(t)\|_{H^{s-\f12}}.
\eeq

\section{Construction of the iteration map}\label{s6}
Given $f_0\in H^{s+\f12}(\BT),\,  \vu_0,\,\vh_0\in H^{s}(\Omega_{f_0})$ and $\hat{\mathbf{J}}\in C^k\big([0,T_0];H^{s-\f12-k}\big), k=0,1$, we first solve $\hat{\vh}_0\in H^s(\Om_{f_0}^+)$ from (\ref{mhd9}) as mentioned before, and assume furthermore that there exists $c_0,c_1>0$ so that
\beq\label{mhd61}
 -1 + \f{c_0}{2}\le f_0(x')\le 1 - \f{c_0}{2},  \quad \Lambda(\vh_0,\hat\vh_0)\ge 2c_1.
\eeq

We then choose $f_*=f_0$ and take $\Omega_*=\Omega_{f_0}$ as the reference region.
The initial data $(f_I,(\p_tf)_I,\vom_{*I},\vj_{*I},\beta_{Ii},
\gamma_{Ii})$ for the equivalent system formulated at the beginning of Section \ref{s3} is defined as follows:
\begin{align*}
&f_I=f_0,\,\, (\p_tf)_I=\vu_0(x',f_0(x'))\cdot(-\partial_1f_0,-\partial_2f_0,1),\\
&\vom_{*I}=\curl\vu_0,\,\, \vj_{*I}=\curl\vh_0,\\
&\beta_{Ii}=\int_{\BT}u_{0i}(x',-1)dx',\,\, \gamma_{Ii}=\int_{\BT}h_{0i}(x',-1)dx'.
\end{align*}
In addition, we choose a large constant $M_0>1$ so that
\begin{align}
&\|f_I\|_{H^{s+\f12}}+\|(\vom_{I*}, \vj_{I*})\|_{H^{s-1}(\Omega_*)}
+\|(\p_tf)_I\|_{H^{s-\f12}} + |\beta_{Ii}|+|\gamma_{Ii}|+ \|\hat{\vh}_0\|_{H^{s}(\Om_*)} \\ \nonumber
\!\!& + \|\hat{\mathbf{J}}(t)\|_{H^{s-\f12}(\Gamma^+)} + \|\p_t\hat{\mathbf{J}}(t)\|_{H^{s-\f32}(\Gamma^+)} \le M_0.
\end{align}

The iteration map we constructed is essentially based on iterating the unknowns $f, \vom_*, \vj_*, \beta_{i},\gamma_{i}$, which satisfy certain evolution equations, while the secondary variable $\hat\vh$ is determined by $(f,\hat{\mathbf{J}})$ through (\ref{mhdhath}).

Now we introduce the following functional space.
\begin{definition}\label{def:X}
Given two positive constants $M_1, M_2>0$ with $M_1\ge 2M_0$, we define
the space $\mathcal{X}=\mathcal{X}(T, M_1, M_2)$ as the collection of
$(f, \vom_*, \vj_*, \beta_{i},\gamma_{i})$, which satisfies
\begin{eqnarray}\label{defM1}
&\left(f(0),\p_tf(0), \vom_*(0), \vj_*(0),\beta_{i}(0),\gamma_{i}(0)\right)=\big(f_I, (\p_t f)_I, \vom_{*I}, \vj_{*I}, \beta_{iI},\gamma_{iI}\big),&\\
\label{defM2}
&\sup\limits_{t\in[0,T]}
\|f(t,\cdot)-f_*\|_{H^{s-\f12}} \le \delta_0,&\\
~~~~&~~~~\sup\limits_{t\in[0,T]}\Big(\|f(t)\|_{H^{s+\f12}}+\|\p_t f(t)\|_{H^{s-\f12}}
+\|(\vom_*, \vj_*)(t)\|_{H^{s-1}(\Omega_*)}
 + |\beta_{i}(t)|+|\gamma_{i}(t)| \Big)\le M_1,&\\
&\sup\limits_{t\in[0,T]}\Big(\|\partial_{t}^2f(t)\|_{H^{s-\f32}}
+\|(\p_t\vom_*, \p_t\vj_*)(t)\|_{H^{s-2}(\Omega_*)}
+ |\p_t\beta_{i}(t)|+|\p_t\gamma_{i}(t)| \Big)\le M_2,&
\end{eqnarray}
together with the following compatibility conditions:
\beq\label{Mcomp}
\int_{\T^2}\pa_t f(t,x')dx'=0,\,\, \int_{\Gamma} \vom_{*3} dx' = \int_{\Gamma} \vj_{*3} dx' = 0.
\eeq
\end{definition}

Given $(f,\vom_*, \vj_*, \beta_{i},\gamma_{i} )\in \mathcal{X}
(T, M_1, M_2)$, our goal is to construct an iteration map
$$(\bar{f},\bar{\vom}_*, \bar{\vj}_*, \bar{\beta}_{i},\bar{\gamma}_{i}) = \mathcal{F}\big(f,\vom_*, \vj_*, \beta_{i},\gamma_{i} \big)
\in\mathcal{X}(T, M_1, M_2)$$
with suitably chosen constants $M_1, M_2$ and $T$.

\subsection{Recover the bulk region, velocity and magnetic fields}\label{s61}
Recall that
\begin{align*}
\Om_f^{+}=\left\{ x \in \Om| x_3 > f (t,x')\right\}, \,\Om_f \left(= \Om_f^{-}\right) = \left\{ x \in \Om| x_3 < f (t,x')\right\},
\end{align*}
and the harmonic coordinate map $\Phi_f:\Omega_*\to\Omega_f$ defined in (\ref{sys:har-coordinate}).

We first define $\hat\vh$ by solving (\ref{mhdhath}). Then $\p_t\hat\vh$ satisfies
\begin{equation}\label{mhd63}
\left\{\begin{aligned}
&\curl\p_t {\hat\vh}=0,\,\,\div\p_t {\hat\vh}=0 \text{ in } \Om_{f}^+,\\
&\p_t {\hat\vh}\cdot\vN_{f}=-\p_t f\partial_3 {\hat\vh}
\cdot\vN_{f} + {\hat{h}}_1\partial_1\p_t {f} + {\hat{h}}_2\partial_2\p_t {f}
 \text{ on }\Gamma_{f}, \\
&\p_t {\hat\vh}\times\ve_3=\p_t\hat{\mathbf{J}} \text{ on } \Gamma^+.
\end{aligned}\right.
\end{equation}
It follows from Proposition \ref{prop:div-curl-2} that
\beq\label{mhd65}
\|{\hat\vh}(t)\|_{H^{s}(\Om_{f}^+)} + \|\p_t {\hat\vh}(t)\|_{H^{s-1}(\Om_{f}^+)} \le C(M_0,M_1).
\eeq

To recover $\vu,\vh$ in $\Om_f$, we define an operator which projects any vector field in
$\Omega_{f}$ to its divergence-free part. More precisely, for any $\vom\in H^s(\Om_f)$, let
$P_{f }^{\div}\vom=\vom-\nabla\phi$ with $\phi$ solving the following mixed boundary value problem:
\beq
\Delta\phi=\div\vom\,\,\text{in}\, \Omega_f,\,\,
\partial_3\phi=0 \text{ on }  \Gamma,\,\, \phi=0 \text{ on }\, \Gamma_f.
\eeq
We denote
\begin{align*}
\widetilde{\vom} = P_{f}^{\div} (\vom _*\circ\Phi_{f }^{-1}),\quad
\widetilde{\vj} = P_{f}^{\div} (\vj _*\circ\Phi_{f }^{-1}).
\end{align*}
It follows from Lemma \ref{lem:basic} for harmonic coordinates and standard elliptic estimates that
\begin{align}\label{eq:wh-est1}
\|(\widetilde{\vom}, \widetilde{\vj})\|_{H^{s-1}(\Omega_f)}\le C(M_1),\quad \|(\pa_t\widetilde{\vom}, \pa_t\widetilde{\vj})\|_{H^{s-2}(\Omega_f)}\le C\big(M_1, M_2\big).
\end{align}
Moreover, since $\div \widetilde{\vom}=0$ and $\widetilde{\vom}_3 = \ve_3\cdot\widetilde{\vom} = \ve_3\cdot \vom_*=\om_{*3}$ on $\Gamma$,
$\widetilde\vom$ satisfies compatibility conditions in (\ref{mhd41}) according to (\ref{Mcomp}). Similar argument applies to $\widetilde{\vj}$.
Then we can define the velocity field $\vu$ and magnetic field $\vh$ in $\Omega_f$ by solving the following div-curl system
\begin{equation}\label{mhd68}
\left\{\begin{aligned}
&\curl\vu=\widetilde{\vom},\,\,\div\vu =0  \text{ in } \Om_f,\\
&\vu\cdot\vN_f=\p_tf \text{ on } \Gamma_{f}, \,\, \vu\cdot\ve_3 = 0,\,\,\int_{\Gamma}u_i dx'=\beta_i \text{ on } \Gamma,\, i=1,2,
\end{aligned}\right.
\end{equation}
and
\begin{equation}
\left\{\begin{aligned}
&\curl \vh=\widetilde{\vj},\,\,\div\vh=0 \text{ in } \Om_f,\\
&\vh\cdot\vN_f = 0 \text{ on } \Gamma_{f}, \,\, \vh\cdot\ve_3 = 0, \,\, \int_{\Gamma} h_i dx'=\gamma_i \text{ on } \Gamma, \, i=1,2.
\end{aligned}\right.
\end{equation}
It follows from Proposition \ref{prop:div-curl} and (\ref{eq:wh-est1}) that
\begin{align}
\|\vu\|_{H^{s}(\Omega_f)}\le &C(M_1)\big(\|\widetilde{\vom}\|_{H^{s-1}(\Omega_f)}+\|\p_tf
\|_{H^{s-\f12}}+|\beta_1|+|\beta_2|\big)\le C(M_1),\\
\|\vh\|_{H^{s}(\Omega_f)}\le &C(M_1)\big(\|\widetilde{\vj}\|_{H^{s-1}(\Omega_f)}+|\gamma_1|+|\gamma_2|\big)\le C(M_1).
\end{align}
In addition,
\begin{align*}
\vu(0)=\vu_0,\,\,\, \vh(0)=\vh_0.
\end{align*}
Using the fact that on $\Gamma_f$,
\begin{align*}
\p_t(\vu\cdot\vN_f)=(\p_t\vu+\partial_3\vu\p_tf)\cdot\vN_f+\vu\cdot\p_t\vN_f,
\end{align*}
we deduce that
\begin{equation}\nonumber
\left\{\begin{aligned}
&\curl\p_t\vu=\p_t\widetilde{\vom},\,\,\,\div\p_t\vu=0 \text{ in } \Om_f,\\
&\p_t\vu\cdot\vN_f=\partial_{t}^2f-\p_tf\partial_3\vu
\cdot\vN_f+u_1\partial_1\p_tf+u_2\partial_2\p_tf
\text{ on } \Gamma_{f}, \\
&\p_t\vu\cdot\ve_3=0, \, \int_{\Gamma}\p_tu_i dx=\p_t\beta_i\text{ on } \Gamma, \,i=1,2.
\end{aligned}\right.
\end{equation}
By Proposition \ref{prop:div-curl} and (\ref{eq:wh-est1}) again,
\begin{align*}
\|\p_t\vu\|_{H^{s-1}(\Omega_f)}\le {C}(M_1, M_2),
\end{align*}
which implies
\begin{align}\nonumber
\|\vu(t)\|_{L^\infty(\Gamma_f)}\le& \|\vu_0\|_{L^\infty(\Gamma_{f_0})}+\int_0^t\|\p_t\vu\|_{L^\infty(\Gamma_f)} dt
\le {M_0}+T{C}(M_1,M_2).
\end{align}
Similarly,
\begin{align*}
\|\p_t\vh(t)\|_{H^{s-1}(\Omega_f)}\le {C}(M_1, M_2),\quad
\|\vh(t)\|_{L^\infty(\Gamma_f)}\le  {M_0} + T{C}(M_1,M_2).
\end{align*}
Also, we have
\beno
\|f(t)-f_0\|_{L^\infty}\le \|f(t)-f_0\|_{H^{s-\f12}}\le T\|\p_tf\|_{H^{s-\f12}}\le T M_1,
\eeno
as well as
\beno
|\Lambda(\vh,\hat\vh)-\Lambda(\vh_0,\hat\vh_0)|\le TC\big(\|\p_t\vh\|_{L^\infty(\Gamma_f)},
\|\p_t\hat{\vh}\|_{L^\infty(\Gamma_f)}\big)\le T C(M_0,M_1,M_2).
\eeno
Without loss of generality, we assume $M_1\le  C(M_1)\le C(M_0, M_1)\leq C(M_1,M_2)\leq C(M_0,M_1,M_2)$ and choose $T\le \min\{1,T_0\}$ small enough so that
\begin{align*}
T C(M_0,M_1,M_2)\le \min\{\delta_0,c_1\} (\le {M_0}).
\end{align*}
Let $L_0=2M_0$, $L_1= 10 C(M_0, M_1)$, $L_2= 10 {C}(M_0,M_1, M_2)$. We conclude that for any $t\in [0,T]$,
\begin{itemize}\label{Lbound}
\item $\|(\vu, \vh,\hat{\vh})(t)\|_{L^{\infty}(\Gamma_f)}\le L_0$;
\item $\|f(t)\|_{H^{s+\f12}}+\|\pa_t f(t)\|_{H^{s-\f12}}+\|(\vu,\vh)(t)\|_{H^{s}(\Omega_f)}
+\|\hat{\vh}(t)\|_{H^{s}(\Omega_f^+)} + \|\p_t\hat{\vh}(t)\|_{H^{s-1}(\Om_f^+)}\le L_1$;
\item $\|(\p_t\vu, \p_t\vh)(t)\|_{L^{\infty}(\Gamma_f)} \le L_2$;
\item $-(1-c_0)\le f(t,x')\le (1-c_0), \,\, \|f(t)-f_*\|_{H^{s-\f12}}\le \delta_0$;
\item $\Lambda(\vh,\hat\vh)(t)\ge c_1$;
\end{itemize}
which are nothing but the bounds listed at the beginning of Section \ref{s5}.

\subsection{Define the iteration map}

Given $(f,\vu,\vh,\hat{\vh})$ as above, we define the iteration map.
We first solve $\bar f_1$ by the linearized system (\ref{sys:linear-H}) and then $(\bar\vom, \bar\vj)$ by (\ref{eq:vorticity-w-L}) and
(\ref{eq:vorticity-h-L}) with the initial data
\begin{align*}
&\left(\bar f_1(0),\bar\theta(0), \bar\vom(0) , \bar\vj(0)\right)
=\big(f_0,(\p_tf)_I, \vom_{*I}, \vj_{*I}\big).
\end{align*}
We define
\begin{align*}
&\bar\vom_{*}=\bar\vom\circ \Phi_{f},\, \bar\vj_{*}=\bar\vj\circ\Phi_{f},\\
&\bar\beta_i(t)=\beta_i(0)-\int_0^t\int_{\Gamma}\big(u_j\partial_ju_i-h_j\partial_jh_i\big)dx'd\tau,\\
&\bar\gamma_i(t)=\gamma_i(0)-\int_0^t\int_{\Gamma}\big(u_j\partial_jh_i-h_j\partial_ju_i\big)dx'd\tau,\\
&\bar f(t,x')=\bar f_1(t,x')-\langle \bar f_1\rangle+\langle f_0\rangle.
\end{align*}
Note that $\langle \bar f\rangle=\langle f_0\rangle$ and $\int_{\T^2}\pa_t \bar f(t,x')dx'=0$ for $t\in [0,T]$. Moreover, according to Proposition \ref{prop:vorticity},
\[
\int_{\Gamma} \bar{\vom}_{*3} dx' = \int_\Gamma \bar{\vom}_3 dx' = 0, \quad \int_{\Gamma} \bar{\vj}_{*3} dx' = \int_{\Gamma} \bar{\vj}_{3} dx' = 0.
\]
The iteration map $\mathcal{F}$ is defined as follows
\ben
\mathcal{F}\big(f,\vom_*, \vj_*, \beta_{i},\gamma_{i} \big)
\eqdefa \big(\bar{f},\bar{\vom}_*, \bar{\vj}_*,\bar{\beta}_{i},
\bar{\gamma}_{i} \big).
\een

\begin{proposition}\label{prop:iteration map}
There exist $M_1, M_2, T>0$ depending on $c_0, c_1, \delta_0, M_0$ so that $\mathcal{F}$ is a map from $\mathcal{X}(T, M_1,M_2)$ to itself.
\end{proposition}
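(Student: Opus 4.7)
The strategy is to verify each of the defining bounds (\ref{defM2})--(\ref{Mcomp}) for the image quintuple, using the construction of Section \ref{s61} and the linear estimates of Section \ref{s5}. First, given an input $(f,\vom_*,\vj_*,\beta_i,\gamma_i)\in\mathcal{X}(T,M_1,M_2)$, the procedure of Section \ref{s61} produces $(\vu,\vh,\hat\vh)$ enjoying the five bulleted bounds at the end of that section with $L_0=2M_0$, $L_1=10C(M_0,M_1)$, $L_2=10C(M_0,M_1,M_2)$; these are precisely the hypotheses under which Propositions \ref{prop:f-L} and \ref{prop:vorticity} may be invoked on the linearized systems (\ref{sys:linear-H}) and (\ref{eq:vorticity-w-L})--(\ref{mhd53}).

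Estimating each component of the output proceeds as follows. For $\bar f_1$ and $\p_t\bar f_1$, Proposition \ref{prop:f-L} gives
\[
\sup_{t\in[0,T]}\mathcal{E}_s(t)\le C(c_1,M_0)\bigl(1+M_0^2\bigr)\exp\bigl(C(c_1,M_0,M_1,M_2)T\bigr),
\]
and subtracting the mean to form $\bar f$ preserves these norms while forcing $\int_{\T^2}\p_t\bar f\,dx'=0$. For $(\bar\vom,\bar\vj)$, Proposition \ref{prop:vorticity} yields an $H^{s-1}(\Omega_f)$ bound of the same type, and the pullback $(\bar\vom_*,\bar\vj_*)=(\bar\vom,\bar\vj)\circ\Phi_f$ is controlled in $H^{s-1}(\Omega_*)$ by Lemma \ref{lem:basic}. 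The averages $\bar\beta_i(t),\bar\gamma_i(t)$ are estimated directly from their defining time integrals via the $L^\infty$ bounds on $(\vu,\vh)$ and their first derivatives, which give $|\bar\beta_i(t)|+|\bar\gamma_i(t)|\le M_0+TC(L_1)$.

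For the second-order time bounds in (\ref{defM2}), I would read off $\p_t^2\bar f$ directly from the second equation of (\ref{sys:linear-H}), obtaining an $H^{s-3/2}$-estimate in terms of $\|\bar f\|_{H^{s+1/2}}$, $\|\ul{\vu},\ul{\vh},\ul{\hat\vh}\|_{H^{s-1/2}}$, and $\|\mathfrak{g}\|_{H^{s-1/2}}$ (the last controlled by Lemma \ref{lem:non-g}). Likewise, $\p_t\bar\vom,\p_t\bar\vj$ are read off from (\ref{eq:vorticity-w-L})--(\ref{eq:vorticity-h-L}) and estimated in $H^{s-2}(\Omega_f)$; for the pulled-back quantities one uses
\[
\p_t\bar\vom_*=(\p_t\bar\vom)\circ\Phi_f+(\nabla\bar\vom)\circ\Phi_f\cdot\p_t\Phi_f,
\]
with $\p_t\Phi_f$ controlled in suitable norms by $\p_tf$ via the harmonic extension. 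Finally, differentiating the definitions of $\bar\beta_i,\bar\gamma_i$ in time gives pointwise bounds by $C(L_1)$.

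For smallness of $\bar f-f_*$ in $H^{s-\f12}$, observe that $\bar f(0)=f_*$ by (\ref{defM1}), so $\|\bar f(t)-f_*\|_{H^{s-\f12}}\le T\sup_{[0,T]}\|\p_t\bar f\|_{H^{s-\f12}}\le TM_1$, which is $\le\delta_0$ for small $T$. The compatibility conditions (\ref{Mcomp}) hold because mean subtraction enforces $\int_{\T^2}\p_t\bar f\,dx'=0$, while Proposition \ref{prop:vorticity} conserves the averages $\int_\Gamma\bar\om_3\,dx',\int_\Gamma\bar\xi_3\,dx'$ in time, starting from their initial value $0$ (which itself follows from the fact that $(\bar\vom,\bar\vj)(0)=(\curl\vu_0,\curl\vh_0)$ and periodicity).

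The delicate point, and the only real obstacle, is the choice of constants: the estimates above all contain the factor $\exp\bigl(C(c_1,M_0,M_1,M_2)T\bigr)$, which depends on $M_1,M_2$ themselves, so a naive choice would not close. The resolution is hierarchical. First fix $M_1$ large in terms of $c_0,c_1,\delta_0,M_0$ only, so that $M_1\ge 2C(c_1,M_0)(1+M_0^2)+2M_0+C(M_0)$; next fix $M_2$ large in terms of $M_0,M_1$, using the equation-based bounds on $\p_t^2\bar f,\p_t\bar\vom_*,\p_t\bar\vj_*$ which depend only on $M_0,M_1$ up to a factor $(1+T)$; finally shrink $T$ so that $\exp(C(c_1,M_0,M_1,M_2)T)\le 2$ and $TM_1\le\delta_0$ and $TC(L_1)\le M_0$. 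This precedence closes all the inequalities and places $\mathcal{F}(f,\vom_*,\vj_*,\beta_i,\gamma_i)$ in $\mathcal{X}(T,M_1,M_2)$.
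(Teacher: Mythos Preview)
Your proposal is correct and follows essentially the same route as the paper: invoke Propositions~\ref{prop:f-L} and~\ref{prop:vorticity} for the $M_1$-level bounds, read $\partial_t^2\bar f$ and $\partial_t\bar\vom_*,\partial_t\bar\vj_*$ off the equations for the $M_2$-level bounds, handle $\bar\beta_i,\bar\gamma_i$ and $\bar f-f_*$ directly, and close by the hierarchical choice $M_1=M_1(c_0,c_1,M_0)$, $M_2=C(M_1)$, then $T$ small. One minor imprecision: the equation-based bounds on $\partial_t^2\bar f,\partial_t\bar\vom_*,\partial_t\bar\vj_*$ do not carry a mere $(1+T)$ factor but rather depend on the output $M_1$-quantities, which are controlled by $C(c_1,M_0)e^{C(M_1,M_2)T}$; the hierarchy still closes because this exponential is rendered $\le 2$ at the final $T$-step, leaving a pure $C(M_1)$ bound for $M_2$ exactly as in the paper.
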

\begin{proof}
First note that the initial conditions is automatically satisfied. Proposition \ref{prop:f-L} and Proposition \ref{prop:vorticity}
ensure that for any $t\in[0,T]$,
\beno
\left( \|\bar{f}(t)\|_{H^{s+\f12}}+\|\p_t\bar{f}(t)\|_{H^{s-\f12}}+\|\bar\vom_*(t)\|_{H^{s-1}
(\Omega_*)}+\|\bar\vj_*(t)\|_{H^{s-1}(\Omega_*)}\right)\le C(c_0,M_0)e^{C(M_1,M_2)T}.
\eeno
From the equation (\ref{sys:linear-H}), (\ref{eq:vorticity-w-L}) and (\ref{eq:vorticity-h-L}) together with (\ref{mhd65}) for $\hat\vh$, we deduce that
\beno
\sup_{t\in[0,T]}\Big(\|\partial_{t}^2\bar f\|_{H^{s-\f32}}
+\|(\p_t\vom_*, \p_t\vj_*)\|_{H^{s-2}(\Omega_*)}\Big)\le C(M_1).
\eeno
Moreover, it is obvious that
\beno
&&|\p_t\bar\beta_i(t)|+|\p_t\bar\gamma_i(t)|\le C(M_1),\\
&&|\bar\beta_i(t)|+|\bar\gamma_i(t)|\le M_0+T C(M_1),\\
&&\|\bar f(t)-f_*\|_{H^{s-\f12}}\le \int_0^t\|\pa_t \bar f(\tau)\|_{H^{s-\f12}}d\tau \le T C(M_1).
\eeno

We take $M_1=2\max\{M_0,C(c_0,M_0)\}$ and $M_2=C(M_1)$.
Finally, let $T$ be sufficiently small depending only on $c_0, c_1, \delta_0, M_0$ so that all other conditions in Definition \ref{def:X} are satisfied.
\end{proof}

\section{Contraction of the Iteration Map}\label{s7}

\subsection{Contraction}

Let $\big(f^A, \vom_*^{ A}, \vj_*^{ A},\beta^{ A}_{i}, \gamma^{ A}_{i}\big)$ and
$\big(f^B$, $\vom_*^{ B}, \vj_*^{ B}, \beta^{ B}_{i}, \gamma^{ B}_{i}\big)$
be two elements in $\mathcal{X}(T, M_1,M_2)$, and
$\big(\bar f^C,\bar \vom_*^{ C}, \bar\vj_*^{ C}, \bar\beta^{ C}_{i},\bar\gamma^{ C}_{i}\big)=\mathcal{F}
\big(f^C$, $\vom_*^{ C}, \vj_*^{ C},\beta^{ C}_{i}, \gamma^{ C}_{i}\big)$ for $C=A,B$. Correspondingly, we have quantities $\vu^C,\vh^C$ and $\hat{\vh}^C$ defined in $\Om_{f^C}$ and $\Om_{f^C}^+$ respectively.
For a quantity $q$, we denote by $q^D$ the difference $q^A-q^B$.

\begin{proposition}\label{prop:contraction}
There exists $T>0$ depending on $c_0, \delta_0, M_0$ so that
\begin{align}\nonumber
\bar E^D_s &:= \sup_{t\in[0,T]}\Big(\|\bar{f}^D(t)\|_{H^{s-\f12}}+\|\p_t\bar{f}^D(t)\|_{H^{s-\f32}}+\|\bar{\vom}_*^{ D}(t)\|_{H^{s-2}(\Omega_*)}
\\&\qquad\qquad+\|\bar{\vj}_*^{ D}(t)\|_{H^{s-2}(\Omega_*)}+|\bar\beta^{ D}_{i}(t)|+|\bar\gamma^{ D}_{i}(t)|\Big)\nonumber\\\nonumber
&\le\frac12\sup_{t\in[0,T]}\Big( \|{f}^D(t)\|_{H^{s-\f12}}+\|\p_t{f}^D(t)\|_{H^{s-\f32}}
+\|\vom_*^{ D}(t)\|_{H^{s-2}(\Omega_*)}\\&\qquad\qquad+\|\vj_*^{ D}(t)\|_{H^{s-2}(\Omega_*)}
+|\beta^{ D}_{i}(t)|+|\gamma^{ D}_{i}(t)|\Big):= E^D_s.\nonumber
\end{align}
\end{proposition}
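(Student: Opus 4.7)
The plan is to derive linear equations for the differences $\bar f^D, \bar\theta^D, \bar\vom_*^D, \bar\vj_*^D$ and $\bar\beta_i^D, \bar\gamma_i^D$, and then to run energy estimates at one level of regularity below those used in Proposition \ref{prop:iteration map}. The drop from $H^{s+\frac12}\!\times\! H^{s-\frac12}$ down to $H^{s-\frac12}\!\times\! H^{s-\frac32}$ (and similarly from $H^{s-1}$ to $H^{s-2}$ for vorticity/current) is what makes the coefficients, which only live in $H^{s-\frac12}$ etc., \emph{admissible multipliers} for the difference equations, so that Cauchy--Schwarz absorbs everything into a constant $C(M_1,M_2)$ times $E^D_s$.

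First I would handle the recovery step of Section \ref{s61} at the level of differences. Since $\Omega_{f^A}\ne\Omega_{f^B}$, I pull $\vu^A,\vh^A$ back to $\Omega_*$ via $\Phi_{f^A}$, same for $B$, and set $\vu_*^A=\vu^A\circ\Phi_{f^A}$ etc. The div-curl systems satisfied by the pulled-back fields have coefficients that depend smoothly on $f^A$; subtracting and applying Proposition \ref{prop:div-curl}, together with standard estimates for harmonic coordinates (Lemma \ref{lem:basic}) yields
\[
\|\vu_*^A-\vu_*^B\|_{H^{s-1}(\Om_*)}+\|\vh_*^A-\vh_*^B\|_{H^{s-1}(\Om_*)}\le C(M_1,M_2)E^D_s,
\]
and likewise $\|\hat\vh_*^A-\hat\vh_*^B\|_{H^{s-1}(\Om_*^+)}\le C(M_1,M_2)E^D_s$ using Proposition \ref{prop:div-curl-2} for the vacuum field and \eqref{mhd63} for the time derivative. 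The traces on $\Gamma_*$ (or equivalently on $\Gamma_{f^A},\Gamma_{f^B}$ via change of coordinates) differ by $C(M_1,M_2)E^D_s$ in $H^{s-\frac32}$.

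Next I write down the equations for the differences. For the height function, subtracting \eqref{sys:linear-H} applied to $A$ and $B$ gives
\begin{align*}
\p_t\bar f^D&=\bar\theta^D,\\
\p_t\bar\theta^D&=-2\ul{u_i^A}\p_i\bar\theta^D+\sum_{i,j}\bigl(-\ul{u_i^A}\ul{u_j^A}+\ul{h_i^A}\ul{h_j^A}+\ul{\hat h_i^A}\ul{\hat h_j^A}\bigr)\p_i\p_j\bar f^D+\mathcal{R}^D,
\end{align*}
where $\mathcal{R}^D$ collects all terms in which the $A$--$B$ difference of coefficients, or the difference $\mathfrak{g}^A-\mathfrak{g}^B$, appears. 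By the bounds above and Lemma \ref{lem:non-g}, $\|\mathcal{R}^D\|_{H^{s-\frac32}}\le C(M_1,M_2)\bigl(E^D_s+\|\bar f^D\|_{H^{s-\frac12}}+\|\bar\theta^D\|_{H^{s-\frac32}}\bigr)$. The same energy identity used for Proposition \ref{prop:f-L}, now at index $s-1$ instead of $s$, together with the stability condition $\Lambda(\vh^A,\hat\vh^A)\ge c_1$, closes an estimate for $\|\bar f^D\|_{H^{s-\frac12}}^2+\|\p_t\bar f^D\|_{H^{s-\frac32}}^2$. For $(\bar\vom^D,\bar\vj^D)$, subtracting \eqref{eq:vorticity-w-L}--\eqref{eq:vorticity-h-L} gives a transport-type system with coefficients $\vu^A,\vh^A$ and a forcing of order $C(M_1,M_2)E^D_s$ in $H^{s-2}$; a standard $H^{s-2}$ energy estimate on $\Om_{f^A}$, followed by pullback to $\Om_*$ via $\Phi_{f^A}$ (which costs another factor controlled by $M_1$), yields the bound for $\bar\vom_*^D,\bar\vj_*^D$. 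The ODEs for $\bar\beta_i,\bar\gamma_i$ are the easiest: direct integration of the difference of the defining integrals gives $|\bar\beta_i^D(t)|+|\bar\gamma_i^D(t)|\le TC(M_1,M_2)E^D_s$.

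Putting everything together, Grönwall's inequality yields $\bar E^D_s\le TC(c_0,c_1,\delta_0,M_0,M_1,M_2)E^D_s$, since all initial data of the differences vanish. Choosing $T$ small (depending only on $c_0,\delta_0,M_0$, because $M_1,M_2$ have themselves been fixed in terms of those in Proposition \ref{prop:iteration map}) so that this prefactor is $\le\frac12$ gives the contraction $\bar E^D_s\le\frac12 E^D_s$. The main obstacle is bookkeeping the difference of the coefficients across two different moving domains $\Om_{f^A},\Om_{f^B}$: I expect the cleanest route is to express every comparison on the fixed reference domain $\Om_*$ via $\Phi_{f^A}$ and $\Phi_{f^B}$, using the identity
\[
g^A\circ\Phi_{f^A}-g^B\circ\Phi_{f^B}=(g^A-g^B)\circ\Phi_{f^A}+g^B\circ\Phi_{f^A}-g^B\circ\Phi_{f^B},
\]
and controlling $\Phi_{f^A}-\Phi_{f^B}$ by $\|f^D\|_{H^{s-\frac12}}$ through harmonic extension. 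Once this framework is set, the remaining work is a verbatim adaptation of the energy estimates already established in Propositions \ref{prop:f-L} and \ref{prop:vorticity}, one derivative lower.
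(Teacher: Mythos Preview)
Your proposal is correct and follows essentially the same approach as the paper: pull everything back to the reference domain via $\Phi_{f^A},\Phi_{f^B}$, estimate $\|\Phi_{f^A}-\Phi_{f^B}\|_{H^{s}(\Om_*)}$ and then $\|(\vu_*^D,\vh_*^D,\hat\vh_*^D)\|_{H^{s-1}}$ by the div-curl propositions, write the difference system for $(\bar f^D,\bar\theta^D)$ with a remainder bounded in $H^{s-\frac32}$ by $CE^D_s$, run the energy argument of Proposition~\ref{prop:f-L} one derivative lower, handle $(\bar\vom_*^D,\bar\vj_*^D)$ and $(\bar\beta_i^D,\bar\gamma_i^D)$ analogously, and conclude via Gr\"onwall. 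The only cosmetic difference is that the paper obtains $\|\mathfrak{R}\|_{H^{s-\frac32}}\le CE^D_s$ directly (no $\bar f^D,\bar\theta^D$ terms on the right), since $\mathfrak{R}$ only contains coefficient differences hitting the \emph{$B$}-quantities and $\mathfrak{g}^A-\mathfrak{g}^B$; your slightly weaker bound is harmless.
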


\begin{proof}
First of all, by the elliptic estimates, we have
\begin{align*}
\|\Phi_{f^A}-\Phi_{f^B}\|_{H^{s}(\Om_*)}\le C(M_1)\|f^A-f^B\|_{H^{s-\f12}}\le C E^D_s.
\end{align*}
Due to the fact that $\vu^A$ and $\vu^B$ are defined on different regions, one can not estimate their difference directly.
To this end, we introduce for $C=A,B$,
$$
\vu^{ C}_*=\vu^{ C}\circ\Phi_{f^C},\,\,
\vh^{ C}_*=\vh^{ C}\circ\Phi_{f^C}, \,\, \hat{\vh}^C_* = \hat{\vh}^C\circ\Phi_{f^C}^+.
$$

We first show that
\begin{align}\label{eq:uh-d}
\left\|\left(\vu^{ D}_*,\vh^{ D}_*\right)\right\|_{H^{s-1}(\Om_*)} + \left\|\hat{\vh}^{ D}_* \right\|_{H^{s-1}(\Om_*^+)}\le CE^D_s.
\end{align}
For a vector field $\vv_*$ defined on $\Omega_*$, we define
\begin{align*}
\curl_C \vv_*=\big(\curl (\vv_*\circ(\Phi_{f^C})^{-1})\big) \circ\Phi_{f^C},\,\,
\div_C \vv_* =\big(\div(\vv_*\circ(\Phi_{f^C})^{-1}\big) \circ\Phi_{f^C},
\end{align*}
for $C=A,B$. Then we find by (\ref{mhd68}) that for $C=A,B$,
\begin{equation}\nonumber
\left\{\begin{aligned}
&\curl_C \vu^{ C}_*=\widetilde{\vom}^{ C}_*,\,\,\, \div_C \vu_*^{ C}=0\,\,\text{ in } \Om_*,\\
&\vu^{ C}_*\cdot\vN_{f^C}=\p_tf^C \text{ on } \Gamma_{*},\,\, \vu^{ C}\cdot\ve_3 = 0,\, \int_{\Gamma} u_i^{ C}dx'=\beta_i^{ C} \text{ on } \Gamma.
\end{aligned}\right.
\end{equation}
Thus, we obtain
\begin{equation}\nonumber
\left\{\begin{aligned}
&\curl_A\vu^{ D}_*=\widetilde{\vom}^{ D}_*+(\curl_B-\curl_A)\vu^{ B}_*\,\,\text{ in }  \Om_*,\\
&\div_A\vu^{ D}_*=(\div_B-\div_A)\vu^{ B}_*\,\,\text{ in }\Om_*,\\
&\vu^{ D}_*\cdot\vN_{f^A}=\p_tf^D+\vu^{ B}_*\cdot(\vN_{f^B}-\vN_{f^A})\,\, \text{ on } \Gamma_{*}, \\
&\vu^{ D}_*\cdot\ve_3=0,\, \int_{\Gamma}u_i^{ D}dx'=\beta_i^{ D} \text{ on } \Gamma.
\end{aligned}\right.
\end{equation}
A tedious but direct calculation shows that
\begin{align*}
\|(\curl_B-\curl_A)\vu^{ B}_*\|_{H^{s-2}(\Omega_*)}\le&~ C\|\Phi_{f^A}-\Phi_{f^B}\|_{H^{s-1}(\Omega_*)}
\le C\|f^D\|_{H^{s-\f12}}\le CE^D_s.
\end{align*}
Similarly,
\begin{align*}
\|(\div_B-\div_A)\vu^{ B}_*\|_{H^{s-2}(\Omega_*)}\le CE^D_s, \quad \|\vu^{ B}_*\cdot(\vN_{f^B}-\vN_{f^B})\|_{H^{s-\f32}}\le CE^D_s.
\end{align*}
We deduce from Proposition \ref{prop:div-curl} that
\begin{align}\nonumber
\|\vu^{ D}_*\|_{H^{s-1}(\Om_*)}\le C\left(\|\widetilde{\vom}^{ D}_*\|_{H^{s-2}(\Omega_*)}
+\|\p_tf^D\|_{H^{s-\f32}}+E^D\right)\le CE^D_s.
\end{align}
By applying similar arguments to $\vh,\hat\vh$, we have from Proposition \ref{prop:div-curl}-\ref{prop:div-curl-2} that
\begin{align}\nonumber
\|\vh^{ D}_*\|_{H^{s-1}(\Om_*)}\le CE^D_s, \quad \|\hat{\vh}^{ D}_*\|_{H^{s-1}(\Om_*^+)}\le CE^D_s.
\end{align}
Thus, we conclude the proof of (\ref{eq:uh-d}).

To estimate $f^D$, we first note that
\beq\label{linear:cauchy:eq-2}\left\{\begin{aligned}
&\p_t\bar{f}_1^D = \bar\theta^D,\\
&\p_t\bar{\theta}^D = 2\left(\ul{u^A_1}\partial_1\bar\theta^D+\ul{u^A_2}\partial_2\bar\theta^D\right)+\sum_{i,j=1,2}
\left(-\ul{u^A_i}\ul{u^A_j}+\ul{h^A_i}\ul{h^A_j}+\ul{\hat{h}^A_i}\ul{\hat{h}^A_j}\right)\partial_i\partial_j\bar{f}^D
+\mathfrak{R},
\end{aligned}\right.
\eeq
where
\begin{align*}
\mathfrak{R}= &-2\left(\left(\ul{u^{D}_1} + \ul{u^{D}_1}\right)\partial_1\bar{\theta}^B
+\left(\ul{u^{D}_2}+\ul{u^{D}_2}\right)\partial_2\bar\theta^B\right)\\
&+\sum_{i,j=1,2}\left(\left(-\ul{u^A_i}\ul{u^A_j}+\ul{h^A_i}\ul{h^A_j}+\ul{\hat{h}^A_i}\ul{\hat{h}^A_j}
\right)-\left(-\ul{u^B_i}\ul{u^B_j}+\ul{h^B_i}\ul{h^B_j}+\ul{\hat{h}^B_i}\ul{\hat{h}^B_j}
\right)\right)\partial_i\partial_j\bar{f}_1^B\\
&+\mathfrak{g}^A-\mathfrak{g}^B.
\end{align*}
Here for $C=A,B$,
\begin{align*}
\mathfrak{g}^C=
&-\frac12\vN_{f^C}\cdot\ul{\nabla(p_{\vu^C,\vu^C}-p_{\vh^C,\vh^C})}+\frac12\vN_{f^C}\cdot\ul{\nabla(|\hha^C|^2-\hat{\mathcal{H}}^+_{f^C} |\hha^C|^2)}+\frac12(\hat{\mathcal{N}}^+_{f^C}-\mathcal{N}_{f^C})|\hat{\vh}^C|^2.
\end{align*}
It is direct to verify that
\beno
\|\mathfrak{R}\|_{H^{s-\f32}}\le CE^D_s.
\eeno
We denote
\begin{align*}
\bar F_{s}^D(\p_t\bar{f}_1^D,\bar{f}_1^D)
=& \big\|(\p_t+u^A_i\partial_i)\langle \na\rangle^{s-\f32}\bar{f}_1^D\big\|_{L^2}^2
+\frac12\big\|h^{A}_i\partial_i\langle \na\rangle^{s-\f32}\bar{f}_1^D\big\|_{L^2}^2\\
&+\frac12\big\|\hat{h}^{A}_i\partial_i\langle \na\rangle^{s-\f32}\bar{f}_1^D\big\|_{L^2}^2.
\end{align*}
A similar argument as in Proposition \ref{prop:f-L} gives
\begin{align*}
\frac{d}{dt}\Big(\bar F_{s}^D(\p_t\bar{f}_1^D,\bar{f}_1^D)+\|\bar{f}_1^D\|_{L^2}^2+\|\p_t\bar{f}_1^D\|_{L^2}^2\Big)
\le  C\big(E^D_s+\bar E_{1s}^D).
\end{align*}
where
\begin{align*}
\bar E_{1s}^D&~=\sup_{t\in[0,T]}\Big(\|\bar{f}_1^D(t)\|_{H^{s-\f12}}+\|\p_t\bar{f}_1^D(t)\|_{H^{s-\f32}}\Big).
\end{align*}
Stability condition on $\vh^A,\hat\vh^A$ implies that
\begin{align*}
\|\bar{f}_1^D\|_{H^{s-\f12}}^2+\|\p_t\bar{f}_1^D\|_{H^{s-\f32}}^2\le C
\Big(\bar F_{s}^D(\bar{f}_1^D,\p_t\bar{f}_1^D)+\|\bar{f}_1^D\|_{L^2}^2
+\|\p_t\bar{f}_1^D\|_{L^2}^2\Big).
\end{align*}
It follows that
\begin{align*}
\bar{E}_{1s}\le CTE^D_s,
\end{align*}
which implies
\begin{align}\label{eq:f-d}
\sup_{t\in[0,T]}\left(\|\bar{f}^D(t)\|_{H^{s-1}}+\|\p_t\bar{f}^D
(t)\|_{H^{s-\f32}}\right)\le CTE^D_s.
\end{align}

Similar to the proof of Proposition \ref{prop:vorticity}, one can show that
\begin{align}\label{eq:wj-d}
\sup_{t\in[0,T]}\left(\|\bar\vom_*^D(t)\|_{H^{s-2}(\Omega_*)}+\|\bar\vj_*^D\|_{H^{s-2}(\Omega_*)}\right)\le CTE^D_s.
\end{align}

Finally, by using the equation
\begin{align*}
\bar\beta^{ C}_i(t)=\beta^{ C}_i(0)+\int_0^t\int_{\Gamma}u_j^{ C}\big(\partial_ju_i^{ C}-h^{ C}_j\partial_jh^{ C}_i\big)dx'd\tau,
\end{align*}
it is obvious that
\begin{align}\label{eq:be-d}
|\bar\beta^{ D}_i(t)|\le|\beta^{ D}_{iI}|+CTE^D_s = CTE^D_s.
\end{align}
Similarly,
\begin{align}\label{eq:ga-d}
|\bar\gamma^{ D}_i(t)|\le|\gamma^{ D}_{iI}|+CTE^D_s = CTE^D_s.
\end{align}

We deduce from (\ref{eq:uh-d}) and (\ref{eq:f-d})--(\ref{eq:ga-d}) that
\begin{align*}
\bar{E}^D_s &\le CTE^D_s.
\end{align*}
The proof is concluded by taking $T=\f{1}{2C}$ with $C$ depending only on $c_0, \delta_0, M_0$.
\end{proof}

\subsection{The limit system}

Proposition \ref{prop:iteration map} and Proposition \ref{prop:contraction} ensure that the map $\mathcal{F}$ has a unique fixed point
$(f,\vom, \vj,\beta_i, \gamma_i)$
in $\mathcal{X}(T,M_1,M_2)$. From the construction of $\mathcal{F}$, we know that $(f,\vom, \vj,\beta_i, \gamma_i)$
satisfies
\beq\label{eq:limit-theta}\left\{\begin{aligned}
\p_tf&~=\theta-\langle\theta\rangle,\\
\p_t\theta&~=
2(\ul{u_1}\partial_1\theta+\ul{u_2}\partial_2\theta\big)-\sum_{i,j=1,2}
\big(\ul{u_i}\ul{u_j}-\ul{h_i}\ul{h_j}
-\ul{\hat h_i}\ul{\hat h_j}\big)\partial_i\partial_jf\\
& \quad-\vN_f \cdot \ul{\nabla(p_{\vu,\vu}-p_{\vh,\vh})}+\frac12\vN_f\cdot\ul{\nabla(|\hha|^2-\hat{\mathcal{H}}_f^+ |\hha|^2)}
+\frac12(\hat{\mathcal{N}}_f^+-\mathcal{N}_f)|\hat{\vh}|^2,
\end{aligned}\right.\eeq
where $(\vu,\vh)$ solves the div-curl system
\beq\label{mhd72}
\left\{
\begin{array}{l}
\curl \vu=P_f^{div}\vom, \, \div\vu=0  \text{ in }  \Om_f,\\
\vu\cdot{\vN_f} = \p_t f \text{ on }\Gamma_f,\, u_3=0\text{ on }\Gamma,\\
\int_{\Gamma}u_i dx'=\beta_i,\,
\p_t\beta_i~=-\int_{\Gamma}(u_j\partial_ju_i-h_j\partial_jh_i)dx',\, i=1,2,
\end{array}\right.\eeq
and
\beq\label{mhd73}\left\{
\begin{array}{l}
\curl\vh=P_f^{div}\vj,\, \div \vh=0\, \text{in} \, \Om_f,\\
\vh\cdot\vN_f=0\, \text{ on } \Gamma_f,\,h_3=0\text{ on }\Gamma,\\
\int_{\Gamma}h_i dx'=\gamma_i,\,
\p_t\gamma_i=-\int_{\Gamma}(u_j\partial_jh_i-h_j\partial_ju_i)dx',\, i=1,2,
\end{array}\right.
\eeq
and $\hat\vh$ solves the div-curl system
\beq\label{mhd74}
\left\{
\begin{array}{l}
\curl\hat{\vh}=0,\, \div \hat{\vh}=0 \text{ in }  \Om_f^+,\\
\hat\vh\cdot\vN_f = 0 \text{ on } \Gamma_f,\,
\hat\vh\times\mathbf{e}_3 = \hat{\mathbf{J}}\text{ on }\Gamma^+,
\end{array}\right.
\eeq
as well as
\beq\label{eq:limit-h}\left\{\begin{aligned}
&\p_t\vom+\vu\cdot\nabla\vom=\vh\cdot\nabla\vj+\vom\cdot\nabla\vu-\vj\cdot\nabla\vh\,\, \text{ in }Q_T,\\
&\p_t\vj+\vu\cdot\nabla\vj=\vh\cdot\nabla\vom+\vj\cdot\nabla\vu
-\vom\cdot\nabla\vh-2\nabla u_i\times\nabla h_i\,\, \text{ in }Q_T.
\end{aligned}\right.\eeq
We also recall that $p_{\vv, \vv}$ for $\vv=\vu,\vh$ is determined by the elliptic equation
\beq\label{eq:limit-pressure}
\left\{
\begin{aligned}
&\Delta p_{\vv, \vv}= -\mathrm{tr}(\nabla\vv\nabla\vv)\,\, \text{ in }\, \Omega_f,\\
&p_{\vv, \vv}=0\text{ on }\Gamma_f, \,\ve_3\cdot\nabla p_{\vv, \vv}=0\text{ on }\,\Gamma.
\end{aligned}\right.
\eeq

\section{From the limit system to the plasma-vacuum interface system}\label{s8}

It is not obvious whether the limit system (\ref{eq:limit-theta})-(\ref{eq:limit-pressure}) is equivalent to the plasma-vacuum interface system (\ref{mhd01})-(\ref{mhd7}). Following \cite{SWZ}, we split the proof into several steps.

{\bf Step 1.} $\curl\vu=\vom$ and $\curl\vh=\vj$.

By $\div\vu=\div\vh=0$, it is easy to verify that
\begin{align*}
&\p_t\div\vom+\vu\cdot\nabla\div\vom=\vh\cdot\nabla\div\vj\,\, \text{ in }Q_T,\\
&\p_t\div\vj+\vu\cdot\nabla\div\vj=\vh\cdot\nabla\div\vom\,\, \text{ in }Q_T,
\end{align*}
which imply that $\div\vom=\div\vj=0$ since it is satisfied initially. Hence $\curl\vu=\vom$, $\curl\vh=\vj$ according to (\ref{mhd72}) and (\ref{mhd73}).
\smallskip

{\bf Step 2.} Determination of the pressure.

Let the pressure $p$ in the plasma region be given by
\begin{align}
p=\frac12\mathcal{H}|\ul{\hat\vh}|^2+p_{\vu, \vu}-p_{\vh, \vh}.
\end{align}
From the calculations in Section \ref{s31},
\begin{align}
-\mathcal{N}_f|\hat{\vh}|^2=&2\sum_{i,j=1,2}\ul{\hat h_i}\ul{\hat h_j}\partial_i\partial_jf
-\vN_f\cdot\ul{\nabla(|\hha|^2-\widehat{\mathcal{H}}_f^+ |\hha|^2)}
+(\widehat{\mathcal{N}}_f^+ -\mathcal{N}_f)|\hat{\vh}|^2,
\end{align}
see (\ref{mhd32}) and (\ref{mhd33}).\smallskip

{\bf Step 3.} The velocity equation.

Let
\begin{align*}
\vw = \p_t\vu+\vu\cdot\nabla\vu-\vh\cdot\nabla\vh+\nabla p.
\end{align*}
We will show that $\vw$ satisfies the following homogeneous equations:
\ben\label{eq:limit-u}
\left\{
\begin{array}{l}
\div\vw=0,\,\, \curl\vw=0\,\,\text{ in }\Omega_f,\\
\vw\cdot\vN_f=0\text{ on }\Gamma_f,\,\,
w_3=0\text{ on } \Gamma,\,\,\int_{\Gamma}w_i dx'=0, \,i=1,2,
\end{array}\right.
\een
which implies $\vw\equiv 0$, i.e.,
\begin{align*}
\p_t\vu+\vu\cdot\nabla\vu-\vh\cdot\nabla\vh+\nabla p = 0\,\,\text{ in } \Omega_f.
\end{align*}

First, by the definition of $p$,
\begin{align}\label{consis:div}
&\div (-\vu\cdot\nabla\vu+\vh\cdot\nabla\vh+\nabla p)=0,
\end{align}
which together with $\div \p_t\vu= 0$ yields $\div\vw=0$ in $\Om_f$. On the other hand, a direct computation by using the equation of $\vom$ shows
\begin{align*}\nonumber
\curl \p_t\vu =\p_t\curl\vu=\p_t\vom &=-\vu\cdot\nabla\vom+\vh\cdot\nabla\vj+\vom\cdot\nabla\vu-\vj\cdot\nabla\vh\\
&=\curl (-\vu\cdot\nabla\vu+\vh\cdot\nabla\vh+\nabla p).
\end{align*}
Thus, we obtain $\curl\vw=0$ in $\Omega_f$.

Since $u_3=0, h_3=0$ on $\Gamma$,
\begin{align}\label{eq:limit-wb}
w_3=\p_tu_3+u_i\partial_iu_3-h_i\partial_ih_3-\partial_3p=0\,\,\text{ on }\,\Gamma.
\end{align}
Moreover, according to (\ref{mhd72}),
\begin{align*}
\int_{\Gamma}w_i dx'=\int_{\Gamma}\big(\p_tu_i+u_j\partial_ju_i-h_j\partial_jh_i\big)dx'=0,\,\, i=1,2.
\end{align*}

It only leaves the boundary condition of $\vw$ on $\Gamma_f$ to be proved. To this end,
we first define the projection operator $\mathcal{P}:L^2(\BT)\to L^2(\BT)$ as
\begin{align*}
\mathcal{P}g = g-\langle g\rangle.
\end{align*}
By conversing the computations in Section \ref{s31}, we find
\begin{align*}
&\mathcal{P}\Big\{-2(\ul{u_1}\partial_1\theta+ \ul{u_2}\partial_2\theta)-\vN_f\cdot\ul{\nabla p}
-\sum_{i,j=1,2}\ul{u_i}\ul{u_j}\partial_i\partial_j f+\sum_{i,j=1,2}\ul{h_i}\ul{h_j}\partial_i\partial_jf\Big\}\\
=&-2\mathcal{P}(\ul{u_1}\partial_1\theta+\ul{u_2}\partial_2\theta)-\frac12\mathcal{N}_f|\hat\vh|^2-\mathcal{P}\vN_f\cdot\ul{\nabla(p_{\vu, \vu}-p_{\vh, \vh})}\\
& -\mathcal{P}\sum_{i,j=1,2}\ul{u_i}\ul{u_j}\partial_i\partial_jf+
\mathcal{P}\sum_{i,j=1,2}\ul{h_i}\ul{h_j}\partial_i\partial_jf\\
=&\mathcal{P}\Big\{-2(\ul{u_1}\partial_1\theta+\ul{u_2}\partial_2\theta\big)-\sum_{i,j=1,2}
\big(\ul{u_i}\ul{u_j}-\ul{h_i}\ul{h_j}
-\ul{\hat h_i}\ul{\hat h_j}\big)\partial_i\partial_jf\\
&-\vN_f\cdot\ul{\nabla(p_{\vu,\vu}-p_{\vh,\vh})}-\frac12\vN_f\cdot\ul{\nabla(|\hha|^2-\widehat{\mathcal{H}}_f^+ |\hha|^2)}
+\frac12(\widehat{\mathcal{N}}_f^+-\mathcal{N}_f)|\hat{\vh}|^2\Big\}\\
=&\mathcal{P}\p_t\theta.
\end{align*}
Recalling that $ \p_t\theta=\p_t^2f$, $\p_t f=\ul{\vu}\cdot\vN_f$, we obtain
\begin{align*}\nonumber
\mathcal{P}\Big\{\p_t(&\ul{\vu}\cdot\vN_f)+2\big(\ul{u_1}\partial_1
(\ul{\vu}\cdot\vN_f)+ \ul{u_2}\partial_2(\ul{\vu}\cdot\vN_f)\big)+\vN_f\cdot\ul{\nabla p}\\
&+\sum_{i,j=1,2}(\ul{h_i}\ul{h_j}-\ul{u_i}\ul{u_j})\partial_i\partial_jf\Big\}=0,
\end{align*}
which together with the fact
\[\p_t\vN_f=(-\partial_1\p_tf,-\partial_2\p_tf, 0)
=\Big(-\partial_1(\ul{\vu}\cdot\vN_f),-\partial_2(\ul{\vu}\cdot\vN_f), 0\Big)\]
implies
\begin{align*}
\mathcal{P}\Big\{(\ul{\p_t\vu}+\ul{\partial_3\vu}\p_tf)\cdot\vN_f
+\big(\ul{u_1}\partial_1(\ul{\vu}\cdot\vN_f)+\ul{u_2}\partial_2(\ul{\vu}\cdot\vN_f)\big)\\
+\sum_{i,j=1,2}\big(\ul{h_i}\ul{h_j}-\ul{u_i}\ul{u_j}\big)\partial_i\partial_jf+\vN_f\cdot\nabla p\Big\}=0.
\end{align*}
It follows from Lemma \ref{rel-uh} that
\begin{align}\label{mhd88}
\mathcal{P}\left\{(\p_t\vu+\vu\cdot\nabla\vu-\vh\cdot\nabla\vh+\nabla p)\left|_{\Gamma_f}\right.\cdot\vN_f\right\}=0.
\end{align}
On the other hand, (\ref{consis:div}) and (\ref{eq:limit-wb}) imply that
\[
\int_{\BT}(\p_t\vu+\vu\cdot\nabla\vu-\vh\cdot\nabla\vh+\nabla p)\left|_{\Gamma_f}\right.\cdot\vN_fdx'=0,
\]
which together with (\ref{mhd88}) yields
\begin{align*}
\vw\cdot\vN_f=(\p_t\vu+\vu\cdot\nabla\vu-\vh\cdot\nabla\vh+\nabla p)\cdot\vN_f=0\text{ on } \Gamma_f.
\end{align*}
This concludes the proof of (\ref{eq:limit-u}).\smallskip

{\bf Step 4.} The magnetic field equation.

Let $\vc{b}=\p_t\vh-\vh\cdot\nabla\vu+\vu\cdot\nabla\vh$.
It suffices to show that
\ben\label{eq:limit-H}
\left\{
\begin{array}{l}
\div \vc{b}=0,\,\, \curl\vc{b}=0\,\,\text{ in }\Omega_f,\\
\vc{b}\cdot\vN_f=0\,\text{ on }\Gamma_f,\,\,
b_3=0\,\text{ on } \Gamma,\,\int_{\Gamma}b_i dx'=0, \, i=1,2.
\end{array}\right.
\een

By using $\vh\cdot\vN_f=0$ on $\Gamma_f$, we get
\begin{align*}
0=&\p_t\big(\ul{\vh}\cdot\vN_f\big)+\ul{u_1}\partial_1(\ul{\vh}\cdot\vN_f)+ \ul{u_2}\partial_2(\ul{\vh}\cdot\vN_f)\\
=&\left(\ul{\p_t\vh} + \ul{\partial_3\vh}\p_tf\right)\cdot\vN_f+\ul{\vh}\cdot\p_t\vN_f
+\left(\ul{u_1}\ul{\partial_1\vh} + \ul{u_1}\ul{\partial_3\vh}\partial_1f\right)\cdot\vN_f\\
&\,+\ul{u_1}\ul{\vh}\cdot\partial_1\vN_f+ \left(\ul{u_2}\ul{\partial_2\vh}+ \ul{u_2}\ul{\partial_3\vh}\partial_2f\right)\cdot\vN_f + \ul{u_2}\ul{\vh}\cdot\partial_2\vN_f\\
=&\left(\ul{\p_t\vh}+\ul{\vu\cdot\nabla\vh}\right)\cdot\vN_f+\p_tf\ul{\partial_3\vh}\cdot\vN_f+\ul{\vh}\cdot\p_t\vN_f\\
&\,+\ul{u_1}\ul{\vh}\cdot\partial_1\vN_f + \ul{u_2}\ul{\vh}\cdot\partial_2\vN_f+\left(\ul{u_1}\ul{\partial_3
\vh}\partial_1f+\ul{u_2}\ul{\partial_3\vh}\partial_2f- \ul{u_3}\ul{\partial_3\vh}\right)\cdot\vN_f\\
=&\left(\ul{\p_t\vh}+\ul{\vu\cdot\nabla\vh}\right)\cdot\vN_f+ \ul{\vh}\cdot\p_t\vN_f
+\ul{u_1}\ul{\vh}\cdot\partial_1\vN_f +\ul{u_2}\ul{\vh}\cdot\partial_2\vN_f.
\end{align*}
On the other hand,
\begin{align*}
&\ul{\vh}\cdot\p_t\vN_f+ \ul{u_1}\ul{\vh}\cdot\partial_1\vN_f +\ul{u_2}\ul{\vh}\cdot\partial_2\vN_f\\
&=-\ul{h_1}\partial_1(\ul{\vu}\cdot\vN_f)-\ul{h_2}\partial_2(\ul{\vu}\cdot\vN_f)+\ul{u_1}\ul{\vh}\cdot\partial_1\vN_f +\ul{u_2}\ul{\vh}\cdot\partial_2\vN_f\\
&=-\ul{h_1}\left(\ul{\partial_1\vu}+\ul{\partial_3\vu}\partial_1f\right)\cdot\vN_f-
\ul{h_2}\left(\ul{\partial_2\vu}+\ul{\partial_3\vu}\partial_2f\right)\cdot\vN_f\\
&\quad -\ul{h_1}\ul{\vu}\cdot\partial_1\vN_f-\ul{h_2}\ul{\vu}\cdot\partial_2\vN_f-\sum_{i,j=1,2}\ul{u_i}\ul{h_j}\partial_i\partial_jf\\
&=\ul{h}_1\left(\ul{\partial_1\vu}+\ul{\partial_3\vu}\partial_1f\right)\cdot\vN_f-
\ul{h_2}\left(\ul{\partial_2\vu}+\ul{\partial_3\vu}\partial_2f\right)\cdot\vN_f\\
&=-(\ul{\vh\cdot\nabla\vu})\cdot\vN_f-(\ul{\partial_3\vu}\cdot\vN_f)(\ul{\vh}\cdot\vN_f)\\
&=-(\ul{\vh\cdot\nabla\vu})\cdot\vN_f.
\end{align*}
Thus, we deduce that
\begin{align}
\big(\p_t\vh-\vh\cdot\nabla\vu+\vu\cdot\nabla\vh\big)\cdot\vN_f=0\,\, \text{on}\,\, \Gamma_f.\nonumber
\end{align}
Moreover,
\begin{align*}
\div(\vh\cdot\nabla\vu-\vu\cdot\nabla\vh)=0
\end{align*}
together with $\div\p_t\vh = 0$ implies $\div\vc{b} = 0$. By (\ref{eq:limit-h}), we have
\begin{align*}
\curl(\p_t\vh)&=\p_t\vj=\curl(-\vu\cdot\nabla\vj+\vh\cdot\nabla\vom+\vj\cdot\nabla\vu
-\vom\cdot\nabla\vh-2\nabla u_i\times\nabla h_i)\\
&=\curl(\vh\cdot\nabla\vu-\vu\cdot\nabla\vh).
\end{align*}
Other boundary conditions in (\ref{eq:limit-H}) follows in a similar manner as in Step 3.

We remark that since $\hat\vh$ is a secondary variable solved out from $f$ (and $\hat{\vc{J}}$), it automatically satisfies the equations. Furthermore, stability condition (\ref{mhd12}) has been shown at the end of Section \ref{s61}. Hence, Step 1-Step 4 are enough to ensure that $(\vu,\vh,\hat\vh, f, p)$ obtained in Section \ref{s7} is a solution of the system (\ref{mhd01})-(\ref{mhd7}).

\section*{Acknowledgment}
Yongzhong Sun is supported by NSF of China under Grant No. 11571167 and the PAPD of Jiangsu Higher Education Institutions. Wei Wang is supported by NSF of China under Grant No. 11501502 and the Fundamental Research Funds for the Central Universities.
Zhifei Zhang is partially supported by NSF of China under Grant No. 11425103.

\end{document}